\newtheorem{theorem}{Theorem}
\theoremstyle{plain}
\newtheorem{definition}{Definition}
\newtheorem{lemma}{Lemma}
\newtheorem{proposition}{Proposition}
\newtheorem{remark}{Remark}
\numberwithin{equation}{section}
\begin{document}
\title{\textbf{Densities non-realizable as the Jacobian of a 2-dimensional 
bi-Lipschitz map are generic}}
\author{Rodolfo Viera}
\maketitle

\vspace{-1cm}

\begin{abstract}In this work, positive functions defined on the plane are considered from a generic viewpoint, 
both in the continuous and the bounded setting. By pursuing on constructions of Burago-Kleiner and 
McMullen, we show that, generically, such a function cannot be written as the Jacobian of a bi-Lipschitz 
homeomorphism.
\end{abstract}

\section{Introduction}

A direct consequence of the Fundamental Theorem of Calculus is that every positive continuous function 
$\rho$ defined on a compact interval $[a,b]$ can be written as the derivative of a diffeomorphism, namely 
$$f(x) := \int_a^x \rho (s) ds.$$
The very same formula shows that every positive $L^{\infty}$ function that is bouded away from zero 
can be written as the a.e. derivative of a bi-Lipschitz homeomorphism. 

As was shown by Burago-Kleiner \cite{BaK} and McMullen \cite{McM} in very important works 
(see also \cite{VK}), this is no longer true in the 2-dimensional framework: there exist positive 
$L^{\infty}$ (even continuous) functions that cannot be written as the Jacobian of a bi-Lipschitz 
homeomorphism of the plane. This pure analytical result is obtained as the fundamental step of another  
result of a discrete nature: there exist coarsely dense, uniformly discrete sets in the plane (that can be taken 
as subsets of $\mathbb{Z}^2$) that are not bi-Lipschitz equivalent to the standard lattice $\mathbb{Z}^2$. 
Although recently a shortcut (and extension) to this last result has been produced in \cite{CN}, the 
analytical one is interesting by itself, and deserves more attention. Based on Burago-Kleiner's version 
of this result (which is slightly more general than McMullen's), in this work we show that not only bad 
densities exist, but are also generic.

\vspace{0.4cm} 

\noindent{\bf Main Theorem.} A generic positive continuous function $\phi: \mathbb{R}^2 \to \mathbb{R}$ 
cannot be written as the Jacobian of a bi-Lipschitz homeomorphism. The same holds for a generic positive 
$L^{\infty}$ function.

\vspace{0.4cm}

We point out that similar results hold (with the same proof) in dimension greater than 2. The 
restriction to the 2-dimensional case below just allows simplifying notation and computations.


\section{The Burago-Kleiner construction}

In this section, we review the Burago-Kleiner construction of a density that is nonrealizable as the Jacobian 
of a bi-Lipschitz map. We begin with some definitions and classical results.

\begin{definition} A map $f:(X_1,d_1)\longrightarrow (X_2,d_2)$ between two metric 
spaces is said to be {\bf $L$-bi-Lipschitz} if for every pair of points $x,y\in X_1$, we have
\begin{center}
$\frac{1}{L}\cdot d_1(x,y)\leq d_2(f(x),f(y))\leq L\cdot d_1(x,y)$.
\end{center}
We call the map $f$ {\bf bi-Lipschitz} if there exists some $L\geq 1$ such that $f$ is $L$-bi-Lipschitz.
\end{definition}

\vspace{0.1cm}

\begin{definition}
We say that two points $x,y\in\mathbb{R}^2$ are {\bf $A$-stretched} under a map $f:\mathbb{R}^2\longrightarrow\mathbb{R}^2$ if $||f(x)-f(y)||\geq A||x-y||$.\\
\end{definition}

We remind a classical result, a proof of which may be found in an Appendix of \cite{Gr}.\\

\noindent\textbf{Theorem (Rademacher).} Let $U\subset\mathbb{R}^n$ be open and $f:U\longrightarrow\mathbb{R}^m$ 
a Lipschitz map. Then $f$ is almost everywhere differentiable in $U$, and its partial derivatives belong to $L^{\infty}$.\\

We denote by $I^2:=[0,1]\times[0,1]$ the standard unit square in $\mathbb{R}^2$. The key tool for the existence of a {\em bad density}, that is, 
a density that is not realizable as the Jacobian of a bi-Lipschitz map, is given in the next proposition from \cite{BaK}.\\

\begin{proposition}For any given $L>1$ and $c>0$, there exists a continuous function $\rho: I^2\longrightarrow [1,1+c]$ such that there is no $L$-bi-Lipschitz homeomorphism $\phi:I^2\longrightarrow\mathbb{R}^2$ satisfying

\begin{center}
$Jac(\phi)=\rho$ a.e.
\end{center}
\end{proposition}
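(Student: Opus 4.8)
The plan is to realize $\rho$ as the uniform limit of densities $\rho_k$ built on successively finer grids, and to derive a contradiction from the fact that the singular values of $D\phi$ are pinched. Recall that for an $L$-bi-Lipschitz $\phi$, Rademacher's theorem supplies $D\phi$ almost everywhere, with singular values in $[1/L,L]$; in particular the anisotropy, meaning the ratio of the largest to the smallest singular value, is at most $L^2$ almost everywhere. The whole construction is aimed at forcing this anisotropy to blow up along a nested sequence of squares, which is the contradiction.

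First I would isolate a single-scale forcing lemma. Fix a square $Q$ of side $s$, subdivide it into an $N\times N$ grid of congruent sub-squares, and prescribe on them an oscillating density pattern of amplitude $\epsilon$ (values in $[1,1+\epsilon]$, smoothed near cell boundaries to keep $\rho$ continuous). The statement to prove is that there exist parameters $N=N_k(L)$, $\epsilon=\epsilon_k$ and an increment $\eta_k>0$ such that any $L$-bi-Lipschitz $\phi$ with $\mathrm{Jac}(\phi)=\rho$ on $Q$ must, on at least one sub-square $Q'$, distort the aspect ratio by a factor at least $(1+\eta_k)$ relative to $Q$. The mechanism is that, $\phi$ being a homeomorphism, the images of the grid lines form a topological grid inside $\phi(Q)$, and the prescribed cell masses $\int_{\mathrm{cell}}\rho$ pin down the areas of the image cells; comparing the areas the map is forced to create against the lengths it is permitted to create (bi-Lipschitz caps the length of the image of each horizontal and vertical fiber) through a co-area/integral-geometric estimate produces a cell whose image is quantitatively elongated.

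Second I would iterate. Starting from $Q_0=I^2$, the lemma yields $Q_1\subset Q_0$ with anisotropy raised by $(1+\eta_1)$; inserting a rescaled copy of the pattern inside $Q_1$ and reapplying the lemma gives $Q_2\subset Q_1$ with anisotropy at least $(1+\eta_1)(1+\eta_2)$, and so on. The density $\rho$ is the limit of the $\rho_k$; since the perturbation at scale $k$ is supported on the ever-smaller square $Q_{k-1}$ and introduced multiplicatively with amplitudes arranged so that $\prod_k(1+\epsilon_k)\leq 1+c$, the limit is continuous and takes values in $[1,1+c]$. On the nested intersection the forced anisotropy is at least $\prod_{j\leq k}(1+\eta_j)$, and if the $\eta_j$ are chosen with $\prod_j(1+\eta_j)=\infty$ this exceeds $L^2$, contradicting Rademacher's bound. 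Hence no such $\phi$ exists.

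The main obstacle is the single-scale lemma together with its compatibility with the range constraint. Naively an amplitude-$\epsilon$ oscillation seems to force only an $O(\epsilon)$ displacement of the grid, hence a negligible distortion, which would demand $\sum_k\epsilon_k=\infty$ and push $\rho$ outside $[1,1+c]$. The technical heart of Burago--Kleiner is to design the patterns and the grid sizes $N_k$ so that the distortions forced at successive scales accumulate without bound even though the amplitudes $\epsilon_k$ stay summable, exploiting the distortion already present at coarser scales when analyzing the next one. Turning this accumulation into the clean recursive estimate above, while running every area-versus-length comparison through Rademacher's theorem and the absolute continuity of $\phi$ on almost every fiber (as $\phi$ is only almost everywhere differentiable), is where essentially all of the difficulty lies.
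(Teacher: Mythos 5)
There is a genuine structural gap, and it sits exactly where you placed your ``main obstacle.'' Your scheme needs two things at once: summable amplitudes, $\prod_k(1+\epsilon_k)\leq 1+c$, to keep $\rho$ inside $[1,1+c]$, and non-summable forced distortions, $\prod_j(1+\eta_j)=\infty$, to contradict the bi-Lipschitz bound. These are incompatible, because the distortion a single scale can force is controlled by that scale's amplitude: an amplitude-$\epsilon$ one-scale grid pattern \emph{is} realizable by a $(1+C\epsilon)$-bi-Lipschitz map (expand each heavy cell slightly, absorb the resulting boundary wiggle in a neighboring layer of cells, and remove the residual $O(\epsilon)$ Jacobian error by a Dacorogna--Moser type correction), so no single-scale lemma can produce $\eta_k$ better than $C\epsilon_k$. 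Consequently $\prod_j(1+\eta_j)\leq\exp\bigl(C\sum_k\epsilon_k\bigr)\leq(1+c)^{2C}$, which for a given $L$ and a small given $c$ never reaches $L^2$: the blow-up you are aiming at cannot materialize, no matter how cleverly the patterns are designed. Deferring the resolution to ``the technical heart of Burago--Kleiner'' does not close this gap, because it rests on a mischaracterization: Burago and Kleiner never let the amplitudes decay, and the tension you identified is not one they resolve --- it is one they avoid entirely.

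Their construction (which the paper follows) is dual to your plan: the amplitude is kept equal to the full $c$ at \emph{every} scale, and what shrinks is the region carrying the new pattern. The first checkerboard (values $1$ and $1+c$) forces the endpoints of some segment to be stretched by a factor $(1+\kappa(L,c))$ relative to the base displacement $\|\phi(1,0)-\phi(0,0)\|$; one then \emph{replaces} (rather than multiplies) the density on a small rectangular neighborhood of that segment by a rescaled amplitude-$c$ checkerboard, which forces a further factor $(1+\kappa)$ on top of the previous one; iterating yields pairs of points stretched by $(1+\kappa)^i/L$, which eventually exceeds $L$ --- a contradiction at a definite finite scale, so there is no need for your pointwise Rademacher argument (which is itself problematic: the nested squares shrink to a single point, and $D\phi$ may fail to exist exactly there, since differentiability is only almost everywhere). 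Because every inserted pattern takes values in $\{1,1+c\}$, the range constraint is automatic; the price is that the output is only a measurable density, and continuity is recovered by a separate compactness reduction: if every smoothing of the bad measurable density $\rho_*$ were realizable by an $L$-bi-Lipschitz map, Arzel\`a--Ascoli would produce an $L$-bi-Lipschitz realization of $\rho_*$ itself. Finally, note that the single-scale lemma must be robust to errors: later insertions change the density seen by earlier scales, so the lemma has to apply to maps whose Jacobian agrees with the prescribed pattern only off a set of measure $\delta_i$, with subsequent insertions confined to total measure below $\delta_i$ (this is exactly Lemma \ref{lema-epsilon} in the paper). Your lemma, stated for $\mathrm{Jac}(\phi)=\rho$ holding exactly on $Q$, would never apply to the limit density.
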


\vspace{0.2cm}

We next sketch the construction of the density above. First of all, Burago and Kleiner observe that it is sufficient to construct a measurable density 
$\rho_*: I^2 \to [1,1+c]$ having the property above. Indeed, if $(\rho_k)_{k\in\mathbb{N}}$ is a sequence of smoothings of the measurable function $\rho_*$ 
that converges to $\rho_*$ in $L^1$, then by the Arzel\`a-Ascoli theorem, any sequence of $L$-bi-Lipschitz maps $\phi_k:I^2\longrightarrow\mathbb{R}^2$ 
such that $Jac(\phi_k)=\rho_k$ a.e will converge, up to a subsequence, to a bi-Lipschitz map $\phi:I^2\longrightarrow\mathbb{R}^2$ satisfying 
$Jac(\phi)=\rho_*$ a.e. See the proof of Proposition \ref{prop:smoothings} for a variation of this argument.


The key tool for the construction of the measurable bad density is the \textbf{checkerboard function} 
$\rho_{N,c}$ (for $N=N(L,c)$ sufficiently large) defined below. 

\vspace{0.4cm}

\begin{definition}
For $N\in\mathbb{N}$, consider the rectangle $R_N:=[0,1]\times \left[0,\frac{1}{N}\right]$ and the squares $S_i:=\left[\frac{i-1}{N},\frac{i}{N}\right]\times \left[0,\frac{1}{N}\right]$, 
with $i=1,\ldots, N$. Given $c>0$, we define the function $\rho_{N,c}: R_N\longrightarrow [1,1+c]$ by letting 
\begin{center}
$\rho_{N,c}(x) = \left\{
\begin{array}{c l}
 1 \qquad \mbox{   if }  x\in S_i\mbox{ with } i \mbox{ even},\\
 \\
 1+c \quad \mbox{   if } x\in S_i\mbox{ with } i \mbox{ odd}.\\
\end{array}
\right.$
\end{center}


\end{definition}

\begin{figure}[h]
\psset{unit=0.7cm,algebraic=true,dimen=middle,dotstyle=o,dotsize=3pt 0,linewidth=0.8pt,arrowsize=3pt 2,arrowinset=0.2}
\begin{pspicture}(-1.,-2.)(10.,3.)

\pspolygon(0.,0.)(0.,1.)(4.,1.)(4.,0.)
\psline(0.,0.)(0.,1.)
\psline(0.,1.)(4.,1.)
\psline(4.,1.)(4.,0.)
\psline(4.,0.)(0.,0.)
\psline(1.,0.)(1.,1.)
\psline(2.,0.)(2.,1.)
\psline(3.,0.)(3.,1.)
\psline(6.,0.44)(9.,0.44)
\pscurve{->}(0.6,0.26)(1.3,-0.66)(2.68,-1.22)(4.82,-1.18)(5.8,-0.26)
\pscurve{->}(1.6,0.8)(2.4,1.6)(3.86,2.26)(6.02,2.38)(7.76,1.86)(8.68,1.2)
\pscurve{->}(2.52,0.12)(3.42,-0.48)(4.7,-0.46)(5.46,0.14)
\pscurve{->}(3.56,0.92)(4.25,1.45)(5.45,1.55)(6.66,1.38)(7.78,1.08)(8.58,0.75)
\uput[l](6,0.44){1}
\uput[r](9,0.44){1+c}
\begin{scriptsize}
\psdots[dotstyle=*](6,0.44)
\psdots[dotstyle=*](9,0.44)
\end{scriptsize}
\end{pspicture}
\caption{The checkerboard function $\rho_{N,c}: R_N\longrightarrow [1,1+c]$.}
\end{figure}
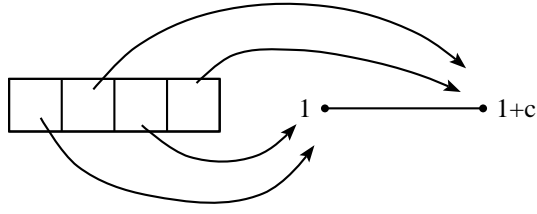

\vspace{0.4cm}

Using a quite long argument, Burago and Kleiner prove that for a large enough $N$, there exists 
$\epsilon=\epsilon(L,c)>0$ such that for every $L$-bi-Lipschitz map $\phi:I^2\longrightarrow\mathbb{R}^2$ 
whose Jacobian $Jac(\phi)$ is equal to $\rho_{N,c}$ except for a set of measure less than $\epsilon$, 
there exist two points $x,y\in R_N$ that are $(1+\kappa)||\phi(0,0)-\phi(1,0)||$-stretched under $\phi$ 
for a certain $\kappa=\kappa(L,c)>0$. Using this, as a next step they modify the function on a 
rectangular neighborhood $U$ of $\overline{xy}$ by including a rescaled version of another  
checkerboard function. By the same argument, there are two points $x_2,y_2\in U$ 
that are $(1+\kappa)^2||\phi(1,0)-\phi(0,0)||$-stretched by an homeomorphism 
realizing this new function as the Jacobian. Repeating this argument at smaller and 
smaller scales, we eventually obtain a function $\rho: R_N\longrightarrow [1,1+c]$ 
which cannot be the Jacobian of an $L$-bi-Lipschitz map.\\

The next lemma summarizes a slightly stronger version of the output of the construction above, and it also appears in \cite{BaK}. 
It guarantees the non-existence of an $L$-bi-Lipschitz map with a certain prescribed Jacobian up to a small error by providing  a 
finite collection of pairs of points that are stretched by a uniform amount under these maps.\\

\begin{lemma}Given $L>1$ and $c>0$, for each positive integer $i$ there is a continuous function $\rho_i:I^2\longrightarrow [1,1+c]$, a 
finite collection $\mathcal{S}_i$ of non-intersecting segments $\overline{l_kr_k}\subset I^2$, and $\delta_i>0$, with the following property: 
For every $L$-bi-Lipschitz map $f:I^2\longrightarrow\mathbb{R}^2$ whose Jacobian differs from $\rho_i$ on a set of area less than $\delta_i$, 
the endpoints of at least one of the segments from $\mathcal{S}_i$ are $\frac{(1+\kappa)^{i}}{L}$-stretched by $f$.
\label{lema-epsilon}
\end{lemma}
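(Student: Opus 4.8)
The plan is to prove Lemma \ref{lema-epsilon} by iterating the Burago–Kleiner single-scale mechanism and carefully bookkeeping the pairs of stretched points that are produced at each scale. The proof proceeds by induction on $i$, so I would first isolate the base step as a clean quantitative statement extracted from the checkerboard analysis already sketched in the text.

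\begin{proof}[Proof sketch]
I would argue by induction on $i$, building each $\rho_i$ from $\rho_{i-1}$ by inserting a rescaled checkerboard in a neighborhood of a previously stretched segment.

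\emph{Base case.} For $i=1$, I would take $\rho_1 = \rho_{N,c}$ to be a single checkerboard function (extended from $R_N$ to all of $I^2$ by, say, tiling or setting it to $1$ off $R_N$) with $N = N(L,c)$ chosen large enough that the Burago–Kleiner single-scale estimate quoted above applies. That estimate yields an $\epsilon = \epsilon(L,c) > 0$ and a pair of points $x,y \in R_N$ that are $(1+\kappa)\|\phi(0,0)-\phi(1,0)\|$-stretched whenever $\operatorname{Jac}(\phi)$ agrees with $\rho_{N,c}$ outside a set of measure $< \epsilon$. Since $\phi$ is $L$-bi-Lipschitz and $\|(0,0)-(1,0)\| = 1$, we have $\|\phi(0,0)-\phi(1,0)\| \geq \frac{1}{L}$, so these points are in fact $\frac{1+\kappa}{L}$-stretched. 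I would record the (finite) collection of candidate segments $\overline{l_k r_k}$ arising from this analysis as $\mathcal{S}_1$, and set $\delta_1 = \epsilon$. The collection is finite and the segments can be taken non-intersecting because the checkerboard argument localizes the stretched pair inside disjoint cells.

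\emph{Inductive step.} Suppose $\rho_{i-1}$, $\mathcal{S}_{i-1}$, $\delta_{i-1}$ have been constructed so that any $L$-bi-Lipschitz $f$ with $\operatorname{Jac}(f)$ differing from $\rho_{i-1}$ on a set of area $< \delta_{i-1}$ stretches the endpoints of some $\overline{l_k r_k} \in \mathcal{S}_{i-1}$ by the factor $\frac{(1+\kappa)^{i-1}}{L}$. For \emph{each} segment $\overline{l_k r_k} \in \mathcal{S}_{i-1}$ I would choose a thin rectangular neighborhood $U_k$ of it, affinely map a fresh checkerboard $\rho_{N,c}$ into $U_k$, and define $\rho_i$ to equal this rescaled checkerboard on each $U_k$ and to equal $\rho_{i-1}$ elsewhere; the $U_k$ can be shrunk so as to remain pairwise disjoint and non-intersecting. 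Given $f$ with $\operatorname{Jac}(f)$ close to $\rho_i$, the inductive hypothesis (applied after noting $\rho_i$ agrees with $\rho_{i-1}$ outside $\bigcup U_k$) forces some $\overline{l_k r_k}$ to be stretched by $\frac{(1+\kappa)^{i-1}}{L}$; restricting attention to that segment's neighborhood $U_k$ and applying the single-scale checkerboard estimate \emph{inside} $U_k$ produces a new pair, whose separation is amplified by a further factor $(1+\kappa)$, giving the factor $\frac{(1+\kappa)^{i}}{L}$. The new segments harvested from all the $U_k$ together form $\mathcal{S}_i$.

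\emph{The bookkeeping of the error budget $\delta_i$.} The main obstacle, and the step requiring the most care, is the scaling of the measure thresholds $\delta_i$. When the single-scale estimate is applied inside a small neighborhood $U_k$, the admissible error is not $\epsilon$ but $\epsilon$ scaled by the area of $U_k$; moreover, to invoke the inductive hypothesis I must ensure that the total area on which $\operatorname{Jac}(f)$ may differ from $\rho_i$ is still below the coarser threshold $\delta_{i-1}$. I would therefore define $\delta_i$ as the minimum of $\delta_{i-1}$ and $\epsilon \cdot \min_k \operatorname{area}(U_k)$, which forces the neighborhoods $U_k$ to be chosen (and their sizes fixed) \emph{before} $\delta_i$ is declared. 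A second point demanding care is the affine rescaling: mapping a checkerboard into a long thin rectangle $U_k$ distorts the aspect ratio, so I must verify that the single-scale Burago–Kleiner estimate is invariant under the affine change of coordinates that sends $R_N$ to $U_k$ — this holds because bi-Lipschitz constants and the Jacobian condition transform controllably under affine maps, but the constants $\kappa$ and the stretching factor must be checked to survive the rescaling without degradation. Finally, continuity of $\rho_i$ is maintained by smoothing $\rho_i$ across the boundaries $\partial U_k$ on a negligibly thin collar, absorbing the resulting discrepancy into the error budget $\delta_i$.
\end{proof}
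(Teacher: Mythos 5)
Your induction --- insert similarity-rescaled checkerboards in thin disjoint neighborhoods of all previously produced segments, and multiply stretch factors --- is exactly the iteration the paper sketches before the lemma (the paper itself offers no proof, citing \cite{BaK} instead), so the architecture is right. But two steps do not work as written. The first and more serious one is the base case: the single-scale estimate you quote produces, for each admissible map $\phi$, \emph{some} pair $x,y\in R_N$ depending on $\phi$ that is $(1+\kappa)||\phi(0,0)-\phi(1,0)||$-stretched; it provides no finite collection of candidate segments fixed in advance. The passage from ``every admissible $f$ stretches some pair'' to ``there is a fixed finite family $\mathcal{S}_1$ of non-intersecting segments, one of which is stretched by every admissible $f$'' is precisely the ``slightly stronger'' content that the paper attributes to \cite{BaK}, so you cannot simply ``record the finite collection of candidate segments arising from this analysis''. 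Filling it requires either Burago--Kleiner's explicit identification of the candidates (in their argument the stretched pairs lie on finitely many horizontal lines, which is also what makes non-intersection arrangeable), or a compactness argument: if no finite family worked, take $L$-bi-Lipschitz maps $f_n$ defeating the first $n$ segments of a countable dense family with Jacobian error $<1/n$, extract an Arzel\`a--Ascoli limit $f$, show $Jac(f)=\rho_{N,c}$ a.e.\ by the area-convergence argument of Section \ref{first section}, and contradict the single-scale estimate; this costs an arbitrarily small decrease of $\kappa$ and still needs structural input to make the family non-intersecting, since transversal crossings cannot be removed by small perturbations. Note also that your inductive step re-uses exactly this finite-collection statement, rescaled, inside each $U_k$, so the gap propagates through the whole proof.

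The second problem is your error budget, which as stated makes the inductive step fail. To invoke the inductive hypothesis you need $Jac(f)$ to differ from $\rho_{i-1}$ on a set of area $<\delta_{i-1}$; but $\rho_i$ differs from $\rho_{i-1}$ on (up to all of) $\bigcup_k U_k$, so from $\lambda(\{Jac(f)\neq\rho_i\})<\delta_i$ you can only conclude $\lambda(\{Jac(f)\neq\rho_{i-1}\})<\delta_i+\sum_k\lambda(U_k)$. Your prescription $\delta_i=\min(\delta_{i-1},\epsilon\cdot\min_k\lambda(U_k))$ gives $\delta_i\leq\delta_{i-1}$, which is not sufficient; the needed constraint is $\sum_k\lambda(U_k)<\delta_{i-1}-\delta_i$, i.e.\ the \emph{neighborhoods}, not just $\delta_i$, must be chosen small compared with the previous threshold (your sentence ``the total area on which $Jac(f)$ may differ from $\rho_i$ is still below the coarser threshold $\delta_{i-1}$'' conflates $\rho_i$ with $\rho_{i-1}$). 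The fix is routine --- require $\sum_k\lambda(U_k)<\delta_{i-1}/2$ and set $\delta_i\leq\min\left(\delta_{i-1}/2,\,\epsilon\cdot\min_k\lambda(U_k)\right)$ --- but it is the one constraint coupling consecutive scales, so it must appear. Two smaller remarks: your worry about affine distortion is empty, since the checkerboard already lives on the thin rectangle $R_N$ and the insertion into $U_k$ is a similarity, under which $L$, $\kappa$ and the relative error threshold transfer unchanged; and the smoothing needed to make $\rho_i$ continuous must be performed along all discontinuity lines of the inserted checkerboards, not only along $\partial U_k$, though the same collar-plus-error-budget device handles both.
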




\section{The continuous case}
\label{first section}

Recall that a subset $G$ of a metric space $M$ is said to be {\bf thick} if it contains a set of the 
form $\bigcap_{n\in\mathbb{N}}G_n$, where each $G_n$ is an open dense subset of $M$.\\

\begin{definition}If all points of a thick subset have some property, then this is said to be a {\bf generic} property of $M$.\\
\end{definition}

We now consider the space $C_+(I^2,\mathbb{R})$ of all positive continuous functions $\rho:I^2\longrightarrow \mathbb{R}$ 
with the norm $||f||_0:=\sup\{|f(x)|\mbox{ : }x\in I^2\}$ . In this section, we will use the Burago-Kleiner construction to prove 
the Main Theorem in the continuous setting.\\ 

\begin{theorem}
Let $\mathcal{C}$ be the set of all functions $\rho\in C_+(I^2,\mathbb{R})$ such that there is no bi-Lipschitz 
map $\phi:I^2\longrightarrow\mathbb{R}^2$ satisfying $\rho=Jac(\phi)$ a.e. Then $\mathcal{C}$ is a thick 
subset of $C^+(I^2,\mathbb{R})$.
\end{theorem}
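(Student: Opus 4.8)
The plan is to realize $\mathcal{C}$ as a countable intersection of open dense sets by organizing functions according to the bi-Lipschitz constant of a putative realizing map. For each integer $L\ge 1$ let $F_L\subset C_+(I^2,\mathbb{R})$ denote the set of $\rho$ for which there exists an $L$-bi-Lipschitz homeomorphism $\phi:I^2\to\mathbb{R}^2$ with $Jac(\phi)=\rho$ a.e. Since every bi-Lipschitz map is $L$-bi-Lipschitz for some integer $L$, the complement of $\mathcal{C}$ equals $\bigcup_{L\in\mathbb{N}}F_L$, so it suffices to prove that each $F_L$ is closed and has empty interior; then $\mathcal{C}=\bigcap_{L\in\mathbb{N}}\big(C_+(I^2,\mathbb{R})\setminus F_L\big)$ exhibits $\mathcal{C}$ as a thick set.

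For closedness, I would take $\rho_k\in F_L$ with $\rho_k\to\rho$ in $\|\cdot\|_0$ and realizing maps $\phi_k$, normalized by $\phi_k(0,0)=0$. The uniform Lipschitz bound makes $(\phi_k)$ equibounded and equicontinuous, so by Arzel\`a--Ascoli a subsequence converges uniformly to an $L$-bi-Lipschitz map $\phi$. The Jacobian is a null Lagrangian, hence weakly continuous under this convergence, so $Jac(\phi_k)\rightharpoonup Jac(\phi)$; combined with $Jac(\phi_k)=\rho_k\to\rho$ this forces $Jac(\phi)=\rho$ a.e. and $\rho\in F_L$. This is exactly the limiting argument already sketched above (cf. the proof of Proposition~\ref{prop:smoothings}).

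The substantive part is that $F_L$ has empty interior, i.e. that its complement is dense. Given $\rho\in C_+$ and $\epsilon>0$, I would use uniform continuity to pick a small square $Q\subset\mathrm{int}\,I^2$ on which $\rho$ varies by less than $\epsilon/2$ around a value $\rho_0>0$, and insert into a slightly smaller concentric square $Q'$ a rescaled Burago--Kleiner density. Two rescalings are involved. A domain-rescaling affine bijection $T:I^2\to Q'$, $T(u)=p+s'u$, satisfies $D(\tfrac1{s'}\phi\circ T)=D\phi\circ T$, so it preserves the constant $L$ and pulls the Jacobian back unchanged; thus if $\rho'|_{Q'}=\sigma\circ T^{-1}$ for some $\sigma:I^2\to\mathbb{R}$ that is not the Jacobian of any $L$-bi-Lipschitz map, then no $L$-bi-Lipschitz $\phi:I^2\to\mathbb{R}^2$ can have $Jac(\phi)=\rho'$ (restrict to $Q'$ and pull back). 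To produce such a $\sigma$ centered near $\rho_0$, set $\sigma=\rho_0\,\rho_i$, where $\rho_i=\rho_i^{(L',c)}$ is the function from Lemma~\ref{lema-epsilon} for the enlarged constant $L'=L\max\{\sqrt{\rho_0},1/\sqrt{\rho_0}\}$ and $c\le \epsilon/(2\rho_0)$, with $i$ chosen so large that $(1+\kappa)^i>(L')^2$. A range-rescaling by $1/\sqrt{\rho_0}$ turns any $L$-bi-Lipschitz realization of $\sigma$ into an $L'$-bi-Lipschitz realization of $\rho_i$, whose existence Lemma~\ref{lema-epsilon} excludes because it would force a pair to be $(1+\kappa)^i/L'$-stretched, exceeding the maximal stretch $L'$ of an $L'$-bi-Lipschitz map. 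Finally, since $\sigma$ takes values in $[\rho_0,\rho_0+\epsilon/2)$, I would extend $\sigma\circ T^{-1}$ continuously to all of $I^2$ by interpolating to $\rho$ across the thin collar $Q\setminus Q'$ and setting $\rho'=\rho$ off $Q$; convexity of the interpolation keeps all values within $\epsilon/2$ of $\rho_0$, so $\rho'\in C_+$, $\|\rho'-\rho\|_0<\epsilon$, and $\rho'\notin F_L$.

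The main obstacle is the simultaneous control required in this last step: the inserted density must be close to $\rho$ in the sup norm yet remain non-realizable for the very same constant $L$. This is delicate because adjusting the ambient value level forces a range-rescaling that inflates the effective bi-Lipschitz constant to $L'$, so one must feed $L'$ (not $L$) into Lemma~\ref{lema-epsilon} and compensate by taking the index $i$ large enough that the guaranteed stretch $(1+\kappa)^i/L'$ still beats $L'$; keeping the two scalings and the smallness of $c$ compatible with a single $\epsilon$-perturbation is where the care lies. The only genuinely analytic input, the weak continuity of the Jacobian used for closedness, is standard and already available in the paper.
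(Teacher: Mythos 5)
Your proposal is correct, and its skeleton is exactly the paper's: write $\mathcal{C}=\bigcap_{L\in\mathbb{N}}\mathcal{C}_L$ (your $C_+(I^2,\mathbb{R})\setminus F_L$) and show each $\mathcal{C}_L$ is open and dense, with the Burago--Kleiner construction as the only nontrivial input. Where you diverge is in the implementation of both halves, and the comparison is instructive. For openness, the paper is self-contained: after Arzel\`a--Ascoli it identifies $Jac(\phi)$ with $\rho$ by proving $Area(\phi_k(U))\to Area(\phi(U))$ for every closed ball $U$, via an elementary lemma trapping $\phi_k(U)$ between the inner and outer $\varepsilon$-neighborhoods of $\phi(U)$; you instead invoke the distributional (null-Lagrangian) continuity of $2\times 2$ Jacobians under uniform convergence of maps with uniformly bounded Lipschitz constants. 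That fact is standard and applies here (the gradients converge weak-$*$ in $L^{\infty}$, and the divergence-form expression of the determinant lets uniform convergence absorb the product nonlinearity), so your route is shorter but leans on outside machinery, and it is not quite ``exactly the limiting argument'' of the paper as you assert --- the identification of the limit Jacobian is done differently there. For density, the paper localizes on a connected component of $\varphi^{-1}((a,a+\varepsilon))$ with $a=\min\varphi$, \emph{asserts} that a continuous non-realizable density $\rho_{\varepsilon}:S_{\varepsilon}\to I_{\varepsilon}$ exists on a small square, and glues via Tietze extension and a partition of unity; you localize on a square where $\rho$ is nearly constant and glue by collar interpolation, an equivalent device, but --- and this is the genuinely valuable difference --- you spell out the rescaling bookkeeping that the paper's assertion hides: the domain homothety costs nothing (it preserves both $L$ and the Jacobian), whereas matching the value level $\rho_0$ forces a range rescaling by $\sqrt{\rho_0}$ that inflates the relevant bi-Lipschitz constant to $L'=L\max\{\sqrt{\rho_0},1/\sqrt{\rho_0}\}$, so the Burago--Kleiner input must be taken non-realizable for $L'$, not $L$, with $c\le\epsilon/(2\rho_0)$. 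You secure this from Lemma \ref{lema-epsilon} by choosing $i$ with $(1+\kappa)^i>(L')^2$, which is a correct self-contained deduction (alternatively, the paper's first Proposition could be quoted directly with parameters $L'$ and $c$). In this respect your write-up fills in a step that the paper's proof of Proposition \ref{prop:smoothings} leaves implicit.
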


\vspace{0.21cm}

This Theorem will follow from the next

\vspace{0.21cm}

\begin{proposition}\label{prop:smoothings}
Given $L>1$, consider the set $\mathcal{C}_{L}$ of all functions $\rho\in C_+(I^2,\mathbb{R})$ 
such that there is no $L$-bi-Lipschitz map $\phi:I^2\longrightarrow\mathbb{R}^2$ satisfying 
$Jac(\phi)=\rho$ a.e. Then $\mathcal{C}_{L}$ is an open dense subset of $C_+(I^2,\mathbb{R})$.
\end{proposition}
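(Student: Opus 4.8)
The plan is to verify the two defining properties of an open dense set separately: that the complement of $\mathcal{C}_L$ is closed in $(C_+(I^2,\mathbb{R}),||\cdot||_0)$, and that $\mathcal{C}_L$ meets every ball. Throughout I denote by $\mathcal{R}_L$ the complement of $\mathcal{C}_L$, i.e. the set of $\rho$ realizable as $Jac(\phi)$ for some $L$-bi-Lipschitz $\phi:I^2\to\mathbb{R}^2$.

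For openness I would show that $\mathcal{R}_L$ is sequentially closed, which is a variation of the Arzel\`a-Ascoli argument already used for the smoothings. Suppose $\rho_n\to\rho$ uniformly with $\rho_n=Jac(\phi_n)$ a.e. for $L$-bi-Lipschitz maps $\phi_n$. After the harmless normalization $\phi_n(0,0)=0$, the maps are uniformly bounded and $L$-Lipschitz, hence equicontinuous, so a subsequence converges uniformly to a map $\phi$; since the two-sided bound passes to the limit, $\phi$ is again $L$-bi-Lipschitz. The one genuinely analytic point is to identify $Jac(\phi)=\rho$ a.e. The determinant is not continuous under mere uniform convergence, so I would instead use that $D\phi_n$ is bounded in $L^\infty$ by Rademacher's theorem, hence converges weakly-$*$ to $D\phi$, together with the divergence (null-Lagrangian) structure $Jac(\phi_n)=\partial_1(\phi_n^1\,\partial_2\phi_n^2)-\partial_2(\phi_n^1\,\partial_1\phi_n^2)$. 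Testing against $\psi\in C_c^\infty(\mathrm{int}\,I^2)$ and pairing the uniformly convergent factor $\phi_n^1$ against the weakly-$*$ convergent derivatives yields $Jac(\phi_n)\to Jac(\phi)$ in the sense of distributions; comparing with $Jac(\phi_n)=\rho_n\to\rho$ gives $Jac(\phi)=\rho$ a.e. Thus $\rho\in\mathcal{R}_L$ and $\mathcal{C}_L$ is open.

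For density, given $\rho$ and $\epsilon>0$ I would insert a rescaled copy of a Burago-Kleiner bad density near an interior point. Fix $x_0$ with $a:=\rho(x_0)>0$ and set $L'':=L\max(\sqrt a,1/\sqrt a)\ge L>1$. Applying Lemma \ref{lema-epsilon} with parameters $L''$ and a small $c>0$, and choosing the index $i$ large enough that $\frac{(1+\kappa)^i}{L''}>L''$, produces a continuous $\rho^*:I^2\to[1,1+c]$ that no $L''$-bi-Lipschitz map can realize even up to an error of area $\delta_i$; indeed such a map would stretch the endpoints of some segment of $\mathcal{S}_i$ by more than $L''$, violating its Lipschitz bound, so $\rho^*\in\mathcal{C}_{L''}$. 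On concentric squares $Q'\subset\subset Q\ni x_0$, with $Q$ so small that $\rho$ oscillates by less than $\epsilon/2$ on it and with $c$ chosen so that $ac<\epsilon/2$, define $\tilde\rho:=a\,(\rho^*\circ h^{-1})$ on $Q'$ where $h(z)=x_0'+s'z$ is the homothety from $I^2$ onto $Q'$ (corner $x_0'$, side $s'$), set $\tilde\rho:=\rho$ off $Q$, and interpolate continuously on the collar $Q\setminus Q'$; then $\tilde\rho\in C_+(I^2,\mathbb{R})$ and $||\tilde\rho-\rho||_0<\epsilon$.

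It remains to check $\tilde\rho\in\mathcal{C}_L$. If some $L$-bi-Lipschitz $\phi$ realized $\tilde\rho$, set $\hat g(z):=\frac{1}{s'\sqrt a}\,\phi(h(z))$ for $z\in I^2$; a direct computation gives $D\hat g=a^{-1/2}\,D\phi\circ h$, hence $Jac(\hat g)=a^{-1}\,Jac(\phi)\circ h=a^{-1}\,\tilde\rho\circ h=\rho^*$ a.e., while the domain rescaling by $h$ combined with the target rescaling by $(s'\sqrt a)^{-1}$ turns the $L$-bi-Lipschitz bound into an $L''$-bi-Lipschitz bound. Thus $\hat g$ would realize $\rho^*$ among $L''$-bi-Lipschitz maps, contradicting the choice of $\rho^*$; therefore $\tilde\rho\in\mathcal{C}_L$, and density follows. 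I expect the main obstacle to be the openness step, specifically the passage to the limit in the Jacobian: one must exploit the divergence structure of the determinant rather than any naive continuity, everything else being either the cited Lemma \ref{lema-epsilon} or elementary rescaling bookkeeping.
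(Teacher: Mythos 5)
Your proposal is correct, but it takes a genuinely different route from the paper at the central analytic step. For openness, both arguments begin identically (normalize, apply Arzel\`a--Ascoli, pass to an $L$-bi-Lipschitz uniform limit $\phi$), but they identify $Jac(\phi)=\rho$ differently: the paper argues geometrically, using injectivity to write $Area(\phi_k(U))=\int_U \rho_k\,d\lambda$ for closed balls $U$ and then proving a separate lemma (via the neighborhoods $V_{\varepsilon}^{ext}(\phi(U))$ and $V_{\varepsilon}^{int}(\phi(U))$) that $Area(\phi_k(U))\to Area(\phi(U))$, whence $\int_U Jac(\phi)\,d\lambda=\int_U\rho\,d\lambda$ for all balls $U$; you instead use the null-Lagrangian (divergence) structure of the determinant together with weak-$*$ compactness of $D\phi_k$ in $L^{\infty}$. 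Your route is more analytic and somewhat more robust---the limit identification never uses injectivity, only the Lipschitz bound---at the cost of invoking two standard facts the paper never needs to state: that the distributional and pointwise Jacobians of a Lipschitz map coincide, and that uniform convergence plus the $L^{\infty}$ derivative bound forces $D\phi_k\rightharpoonup^{*} D\phi$. For density, the two constructions share the same core idea (plant a localized Burago--Kleiner bad density on a small square), but the implementations differ: the paper works inside a connected component of $\varphi^{-1}((a,a+\varepsilon))$ with $a=\min\varphi$ and glues using the Tietze extension theorem plus a partition of unity, while quietly assuming the existence of a continuous bad density with values in $(a,a+\varepsilon)$ on a small square; you place the square at an arbitrary point $x_0$, control the error by the oscillation of $\rho$ near $x_0$, and make explicit precisely the two points the paper glosses over, namely how Lemma \ref{lema-epsilon} yields a density not realizable by any $L''$-bi-Lipschitz map (choose $i$ with $(1+\kappa)^i/L''>L''$, so the guaranteed stretching contradicts the Lipschitz bound) and how the affine domain/target rescaling converts the constant $L$ into $L''=L\max(\sqrt{a},1/\sqrt{a})$. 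In this respect your density argument is more self-contained than the paper's; both are valid.
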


\vspace{0.2cm}

Theorem 1 is a direct consequence of this, since 
$$ \bigcap_{n\in\mathbb{N}}\mathcal{C}_{n} = \mathcal{C}.$$

To prove Proposition 2, we first establish that $\mathcal{C}_{L}$ is an open subset of 
$C_+(I^2,\mathbb{R})$ for each $L > 0$. Let $X:=C_+(I^2,\mathbb{R})\setminus\mathcal{C}_{L}$, and consider a sequence 
$(\rho_k)_{k\in\mathbb{N}}\subset X$ such that $\rho_k\xrightarrow[k\rightarrow\infty]{}\rho$. We need to show that $\rho \notin \mathcal{C}_L$. 
To do this, let $\phi_k: I^2 \longrightarrow\mathbb{R}^2$ be a sequence of $L$-bi-Lipschitz maps such that $Jac(\phi_k)=\rho_k$ a.e. By Arzel\`{a}-Ascoli 
theorem, $(\phi_k)_{k\in\mathbb{N}}$ has a subsequence that converges to an $L$-bi-Lipschitz map $\phi:I^2\longrightarrow\mathbb{R}^2$. To simplify 
notation, we assume without loss of generality that $\phi_k\xrightarrow[k\rightarrow\infty]{}\phi$. Our goal is to show that $Jac (\phi) = \rho$, 
and hence $\rho \notin \mathcal{C}_L$. 

Let $U\subset I^2$ be a closed ball and denote by $\lambda$ the Lebesgue measure in $\mathbb{R}^2$. Then, as $\rho_k \to \rho$,
\begin{equation*}
\begin{split}
Area(\phi_k(U))&=\int_U{Jac(\phi_k)d\lambda}\\
               &=\int_U{\rho_kd\lambda}\\
               &\xrightarrow[k\rightarrow\infty]{}\int_U{\rho d\lambda}.\\
\end{split}
\end{equation*}

\psset{unit=1cm,algebraic=true,dimen=middle,dotstyle=o,dotsize=3pt 0,linewidth=0.8pt,arrowsize=3pt 2,arrowinset=0.25}
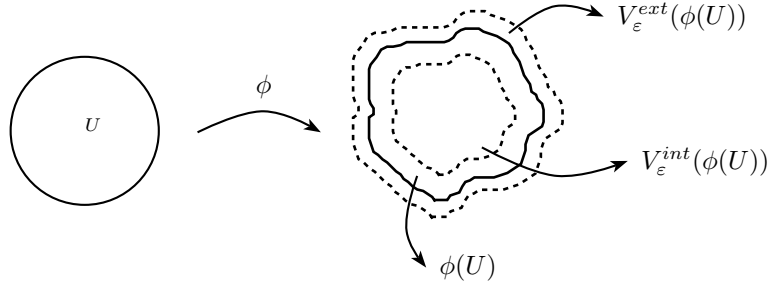
\begin{figure}[h]
\begin{pspicture}(-1.,-1.5)(10.,3.)
\pscircle(1.,1.){0.9848857801796106}
\psline[linewidth=1pt](5.64,0.08)(5.72,0.08)(5.82,0.08)(5.86,0.14)(5.92,0.16)(5.98,0.18)(6.02,0.26)(6.06,0.32)(6.12,0.34)(6.18,0.38)(6.26,0.38)(6.36,0.38)(6.44,0.38)(6.5,0.4)(6.56,0.42)(6.62,0.44)(6.68,0.48)(6.74,0.52)(6.8,0.56)(6.82,0.62)(6.84,0.68)(6.86,0.74)(6.86,0.82)(6.88,0.88)(6.9,0.94)(6.96,0.98)(7.02,1.02)(7.08,1.06)(7.1,1.12)(7.1,1.2)(7.1,1.28)(7.08,1.36)(7.02,1.42)(7.,1.48)(6.94,1.52)(6.92,1.62)(6.88,1.68)(6.84,1.74)(6.82,1.8)(6.8,1.86)(6.78,1.92)(6.74,1.98)(6.7,2.04)(6.68,2.1)(6.64,2.16)(6.58,2.2)(6.48,2.26)(6.42,2.28)(6.36,2.3)(6.3,2.32)(6.24,2.36)(6.14,2.36)(6.06,2.36)(6.,2.34)(5.94,2.3)(5.9,2.24)(5.84,2.2)(5.78,2.18)(5.7,2.18)(5.62,2.18)(5.52,2.18)(5.44,2.18)(5.36,2.18)(5.28,2.18)(5.2,2.14)(5.1,2.12)(5.02,2.12)(4.94,2.08)(4.88,1.98)(4.8,1.88)(4.78,1.82)(4.78,1.74)(4.78,1.66)(4.78,1.58)(4.78,1.5)(4.78,1.42)(4.8,1.36)(4.84,1.3)(4.84,1.22)(4.78,1.18)(4.78,1.08)(4.78,1.)(4.78,0.9)(4.8,0.84)(4.82,0.78)(4.84,0.72)(4.9,0.68)(4.96,0.62)(5.02,0.58)(5.08,0.52)(5.14,0.46)(5.2,0.42)(5.24,0.36)(5.3,0.32)(5.36,0.26)(5.42,0.22)(5.5,0.2)(5.56,0.16)(5.62,0.12)(5.66,0.06)
\psline[linewidth=1pt,linestyle=dashed,dash=2pt 2pt](5.7,0.416)(5.756,0.416)(5.826,0.416)(5.854,0.458)(5.896,0.472)(5.938,0.486)(5.966,0.542)(5.994,0.584)(6.036,0.598)(6.078,0.626)(6.134,0.626)(6.204,0.626)(6.26,0.626)(6.302,0.64)(6.344,0.654)(6.386,0.668)(6.428,0.696)(6.47,0.724)(6.512,0.752)(6.526,0.794)(6.54,0.836)(6.554,0.878)(6.554,0.934)(6.568,0.976)(6.582,1.018)(6.624,1.046)(6.666,1.074)(6.708,1.102)(6.722,1.144)(6.722,1.2)(6.722,1.256)(6.708,1.312)(6.666,1.354)(6.652,1.396)(6.61,1.424)(6.596,1.494)(6.568,1.536)(6.54,1.578)(6.526,1.62)(6.512,1.662)(6.498,1.704)(6.47,1.746)(6.442,1.788)(6.428,1.83)(6.4,1.872)(6.358,1.9)(6.288,1.942)(6.246,1.956)(6.204,1.97)(6.162,1.984)(6.12,2.012)(6.05,2.012)(5.994,2.012)(5.952,1.998)(5.91,1.97)(5.882,1.928)(5.84,1.9)(5.798,1.886)(5.742,1.886)(5.686,1.886)(5.616,1.886)(5.56,1.886)(5.504,1.886)(5.448,1.886)(5.392,1.858)(5.322,1.844)(5.266,1.844)(5.21,1.816)(5.168,1.746)(5.112,1.676)(5.098,1.634)(5.098,1.578)(5.098,1.522)(5.098,1.466)(5.098,1.41)(5.098,1.354)(5.112,1.312)(5.14,1.27)(5.14,1.214)(5.098,1.186)(5.098,1.116)(5.098,1.06)(5.098,0.99)(5.112,0.948)(5.126,0.906)(5.14,0.864)(5.182,0.836)(5.224,0.794)(5.266,0.766)(5.308,0.724)(5.35,0.682)(5.392,0.654)(5.42,0.612)(5.462,0.584)(5.504,0.542)(5.546,0.514)(5.602,0.5)(5.644,0.472)(5.686,0.444)(5.714,0.402)
\psline[linewidth=1pt,linestyle=dashed,dash=2pt 2pt](5.6,-0.144)(5.696,-0.144)(5.816,-0.144)(5.864,-0.072)(5.936,-0.048)(6.008,-0.024)(6.056,0.072)(6.104,0.144)(6.176,0.168)(6.248,0.216)(6.344,0.216)(6.464,0.216)(6.56,0.216)(6.632,0.24)(6.704,0.264)(6.776,0.288)(6.848,0.336)(6.92,0.384)(6.992,0.432)(7.016,0.504)(7.04,0.576)(7.064,0.648)(7.064,0.744)(7.088,0.816)(7.112,0.888)(7.184,0.936)(7.256,0.984)(7.328,1.032)(7.352,1.104)(7.352,1.2)(7.352,1.296)(7.328,1.392)(7.256,1.464)(7.232,1.536)(7.16,1.584)(7.136,1.704)(7.088,1.776)(7.04,1.848)(7.016,1.92)(6.992,1.992)(6.968,2.064)(6.92,2.136)(6.872,2.208)(6.848,2.28)(6.8,2.352)(6.728,2.4)(6.608,2.472)(6.536,2.496)(6.464,2.52)(6.392,2.544)(6.32,2.592)(6.2,2.592)(6.104,2.592)(6.032,2.568)(5.96,2.52)(5.912,2.448)(5.84,2.4)(5.768,2.376)(5.672,2.376)(5.576,2.376)(5.456,2.376)(5.36,2.376)(5.264,2.376)(5.168,2.376)(5.072,2.328)(4.952,2.304)(4.856,2.304)(4.76,2.256)(4.688,2.136)(4.592,2.016)(4.568,1.944)(4.568,1.848)(4.568,1.752)(4.568,1.656)(4.568,1.56)(4.568,1.464)(4.592,1.392)(4.64,1.32)(4.64,1.224)(4.568,1.176)(4.568,1.056)(4.568,0.96)(4.568,0.84)(4.592,0.768)(4.616,0.696)(4.64,0.624)(4.712,0.576)(4.784,0.504)(4.856,0.456)(4.928,0.384)(5.,0.312)(5.072,0.264)(5.12,0.192)(5.192,0.144)(5.264,0.072)(5.336,0.024)(5.432,0.)(5.504,-0.048)(5.576,-0.096)(5.624,-0.168)
\pscurve{->}(2.5,0.98)(3.38,1.26)(4.14,0.98)
\uput[u](3.38,1.26){$\phi$}
\pscurve{->}(6.64,2.3)(7.26,2.66)(7.9,2.52)
\uput[r](7.9,2.52){$V_{\varepsilon}^{ext}(\phi(U))$}
\pscurve{->}(6.28,0.78)(7.28,0.4)(8.22,0.6)
\uput[r](8.22,0.6){$V_{\varepsilon}^{int}(\phi(U))$}
\pscurve{->}(5.42,0.4)(5.28,-0.34)(5.54,-0.82)
\uput[r](5.54,-0.82){$\phi(U)$}
\begin{scriptsize}
\rput[bl](1,1){{$U$}}
\end{scriptsize}
\end{pspicture}
\caption{The sets $\phi(U)$, $V_{\varepsilon}^{ext}(\phi(U))$ and $V_{\varepsilon}^{int}(\phi(U))$.}
\end{figure}

Below we show that, also, $Area(\phi_k(U)) \to Area (\phi (U))$, and hence $Jac (\phi) = \rho$, as announced. 
To do this, given $\varepsilon > 0$, consider the sets 
$$V_{\varepsilon}^{ext}(\phi(U)):=\{x\in\mathbb{R}^2\mbox{: }d(x,\phi(U))<\varepsilon\}$$ 
and 
$$V_{\varepsilon}^{int}(\phi(U)):=\{x\in\phi(U)\mbox{: }d(x,\partial\phi(U))>\varepsilon\}.$$
Then the desired convergence 
$$Area(\phi_k(U)) \to Area (\phi (U))$$
obviously follows from the next 

\begin{lemma} 
Given $\varepsilon>0$, there exists a positive integer $k_0$ such that if $k\geq k_0$, then 
\begin{itemize}
\item[i) ] $\phi_k(U)\subset V_{\varepsilon}^{ext}(\phi(U))$,
\item[ii) ]$V_{\varepsilon}^{int}(\phi(U))\subset\phi_k(U)$.\\
\end{itemize}
\end{lemma}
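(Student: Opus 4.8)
The plan is to derive both inclusions from the single fact that, by Arzel\`a--Ascoli, the subsequence converges \emph{uniformly}, $\sup_{x\in I^2}\|\phi_k(x)-\phi(x)\|\to 0$ (equicontinuity of the $L$-Lipschitz family upgrades subsequential pointwise convergence to uniform convergence), and the limit $\phi$ is again $L$-bi-Lipschitz, in particular an injective continuous map. Fix $\varepsilon>0$ and pick $k_0$ so large that $\sup_{x\in I^2}\|\phi_k(x)-\phi(x)\|<\varepsilon$ for every $k\geq k_0$; I claim this one $k_0$, depending only on $\varepsilon$, serves both parts and every relevant point at once. Part i) is then immediate: for $x\in U$ we get $d(\phi_k(x),\phi(U))\leq \|\phi_k(x)-\phi(x)\|<\varepsilon$, so $\phi_k(x)\in V_{\varepsilon}^{ext}(\phi(U))$.

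Part ii) is the substantive one, and I would treat it with a Brouwer-degree argument (equivalently, in the plane, a winding-number count). Since $\phi$ is a homeomorphism of $U$ onto $\phi(U)$, invariance of domain shows that $\phi$ carries the interior $\mathrm{int}(U)$ onto $\mathrm{int}(\phi(U))$ and the circle $\partial U$ onto the topological boundary $\partial\phi(U)$. Now fix $y\in V_{\varepsilon}^{int}(\phi(U))$, so that $y\in\phi(U)$ and $d(y,\partial\phi(U))>\varepsilon$; in particular $y$ lies in the open set $\phi(\mathrm{int}\,U)$ with a unique $\phi$-preimage there, so the degree $\deg(\phi,\mathrm{int}\,U,y)$ is well defined and equals $\pm 1$.

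The idea is to transfer this nonzero degree to $\phi_k$ through the straight-line homotopy $H(x,t):=(1-t)\phi(x)+t\,\phi_k(x)$. The crucial estimate is that $H$ never hits $y$ on the boundary: for $x\in\partial U$ one has $\phi(x)\in\partial\phi(U)$, hence $\|\phi(x)-y\|\geq d(y,\partial\phi(U))>\varepsilon$, while $\|H(x,t)-\phi(x)\|\leq \sup_{I^2}\|\phi_k-\phi\|<\varepsilon$, so that $\|H(x,t)-y\|\geq \|\phi(x)-y\|-\|H(x,t)-\phi(x)\|>0$ for all $t\in[0,1]$. Thus $y\notin H(\partial U\times[0,1])$, and homotopy invariance of the degree yields $\deg(\phi_k,\mathrm{int}\,U,y)=\deg(\phi,\mathrm{int}\,U,y)=\pm 1\neq 0$. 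A nonzero degree forces $\phi_k(x)=y$ to be solvable in $\mathrm{int}\,U$, i.e. $y\in\phi_k(U)$, which is exactly ii); and since the boundary estimate used only $\sup_{I^2}\|\phi_k-\phi\|<\varepsilon$, the same $k_0$ works for every $y\in V_{\varepsilon}^{int}(\phi(U))$ simultaneously.

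I expect the main obstacle to be the boundary bookkeeping rather than the degree machinery itself: the whole argument hinges on knowing that $\phi$ sends $\partial U$ precisely onto $\partial\phi(U)$, which is what converts the hypothesis $d(y,\partial\phi(U))>\varepsilon$ into the usable lower bound $\|\phi(x)-y\|>\varepsilon$ on $\partial U$. This is where invariance of domain enters (or, in the planar case, the Jordan curve theorem applied to the Jordan curve $\phi(\partial U)$ together with an elementary winding-number computation, which would make the proof self-contained). A minor point to dispatch first is that it suffices to verify $Jac(\phi)=\rho$ a.e.\ by testing against closed balls $U\subset\mathrm{int}(I^2)$, since $\partial I^2$ is Lebesgue-null and these balls suffice via Lebesgue differentiation; this places $\mathrm{int}(U)$ inside the open set $\mathrm{int}(I^2)$ where $\phi$ is injective and continuous, so invariance of domain applies without fuss.
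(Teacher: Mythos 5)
Your proof is correct, but it takes a genuinely different route from the paper's, most notably in part ii). For i) the paper argues by contradiction: it takes points $y_k\in\phi_k(U)\setminus V_{\varepsilon}^{ext}(\phi(U))$, extracts a convergent subsequence of preimages, and uses equicontinuity to force an accumulation point inside $V_{\varepsilon}^{ext}(\phi(U))$; your one-line argument from uniform convergence (which Arzel\`a--Ascoli indeed provides on the compact set $I^2$) is simpler and equivalent. For ii) the paper's mechanism is quantitative and uses the bi-Lipschitz constant of the maps $\phi_k$ themselves: it takes points on $\partial U$ at mutual arc-distance less than $\varepsilon/2L$, notes that $|\phi_k-\phi|<\varepsilon/4$ at these points for large $k$, and uses the $L$-Lipschitz bound to conclude that every point of $\partial(\phi_k(U))$ lies within distance $\varepsilon$ of $\partial\phi(U)$, so that $\partial(\phi_k(U))\cap V_{\varepsilon}^{int}(\phi(U))=\emptyset$; containment is then asserted. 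Your homotopy-invariance-of-degree argument replaces this entirely: it uses only the uniform bound $\sup_{I^2}\|\phi_k-\phi\|<\varepsilon$ and the injectivity of the limit $\phi$, never the Lipschitz constants of the $\phi_k$, and it works verbatim in any dimension. It also buys something the paper glosses over: the final step ``$\partial(\phi_k(U))$ misses $V_{\varepsilon}^{int}(\phi(U))$, hence $V_{\varepsilon}^{int}(\phi(U))\subset\phi_k(U)$'' implicitly needs a topological argument (a connected component of $V_{\varepsilon}^{int}(\phi(U))$, which need not be connected as a whole, could a priori lie entirely outside $\phi_k(U)$), and your nonvanishing degree $\deg(\phi_k,\mathrm{int}\,U,y)=\pm 1$ settles exactly this point for every $y$ simultaneously. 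The trade-off is machinery: the paper stays elementary (lengths of boundary arcs), while you invoke Brouwer degree and invariance of domain; note, though, that both routes ultimately rest on the same topological prerequisite that a continuous injection sends $\partial U$ onto the boundary of the image (the paper uses it tacitly for $\phi_k$ when speaking of ``the curve in the boundary of $\phi_k(U)$'', you use it for $\phi$, and your invariance-of-domain justification of it is sound).
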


\begin{proof}  Assume i) does not hold. Then for each $k\in\mathbb{N}$ there exists $y_k\in\phi_k(U) \setminus V_{\varepsilon}^{ext}(\phi(U))$. 
Let $x_k \in U$ be such that $y_k=\phi_k(x_k)$. Since $U$ is closed, there exists a sequence $(x_{k_l})_{l\in\mathbb{N}}$ that converges to a certain 
$x\in U$. By the equicontinuity of the sequence $(\phi_k)_{k\in\mathbb{N}}$ and the convergences $x_{k_l}\xrightarrow[l\rightarrow\infty]{}x$ and 
$\phi_k \to \phi$, there exists $l_0\in\mathbb{N}$ such that if $l\geq l_0$, then $|\phi_{k_l}(x_{k_l})-\phi_{k_l}(x)|<\frac{\varepsilon}{2}$ and 
$|\phi_{k_l}(x)-\phi(x)|<\frac{\varepsilon}{2}$. Thus, for $l\geq l_0$,
$$|\phi_{k_l}(x_{k_l})-\phi(x)|
\leq 
|\phi_{k_l}(x_{k_l})-\phi_{k_l}(x)|+|\phi_{k_l}(x)-\phi(x)|
 <
\dfrac{\varepsilon}{2}+\dfrac{\varepsilon}{2}
=
\varepsilon.$$
Hence, $y_{k_l}=\phi_{k_l}(x_{k_l})\xrightarrow[l\rightarrow\infty]{}\phi(x)\in\phi(U)\subset V_{\varepsilon}^{ext}(\phi(U))$. However, this is impossible, 
since $(y_k)$ is a sequence in the closed subset $\mathbb{R}^2\setminus V_{\varepsilon}^{ext}(\phi(U))$, and then all its accumulation points lie 
in $\mathbb{R}^2\setminus V_{\varepsilon}^{ext}(\phi(U))$.

To prove ii), let $x,y \in \partial U$ be such that the arc between $x$ and $y$ has length less than $\frac{\varepsilon}{2L}$. Let 
$k_0\in\mathbb{N}$ be such that $|\phi_{k}(x)-\phi(x)|<\varepsilon/4$ and $|\phi_{k}(y)-\phi(y)|<\varepsilon/4$ for all $k \geq k_0$. Since $\phi_{k}$ 
is $L$-bi-Lipschitz, the curve in the boundary of $\phi_{k}(U)$ joining $\phi_{k}(x)$ and $\phi_{k}(y)$ has a length $<\varepsilon/2$. Therefore, 
$\partial(\phi_{k}(U))\cap V_{\varepsilon}^{int}(\phi(U))=\emptyset$, and thus $V_{\varepsilon}^{int}(\phi(U))\subset\phi_k(U)$, for $k \geq k_0$.
\end{proof}

\begin{remark} \label{rem-infty}
Define $\mathcal{C}_{L,\infty}$ as being the set of all positive functions $\rho\in L^{\infty}_+(I^2,\mathbb{R})$ such that there 
is no $L$-bi-Lipschitz map $f \! : I^2\longrightarrow\mathbb{R}^2$ satisfying $Jac(f)=\rho$ a.e. Then the same argument above 
shows that $\mathcal{C}_{L,\infty}$ is an open subset of $L^{\infty}_+ (I^2,\mathbb{R})$. 
\end{remark}

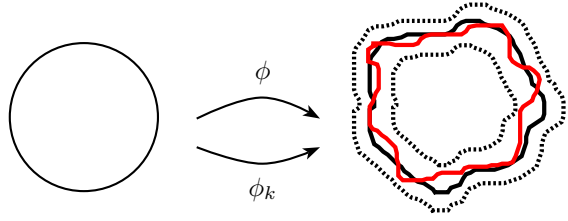
\begin{figure}[h]
\begin{pspicture*}(-1.,-1.)(8.,3.)
\pscircle(1.,1.){0.9848857801796106}
\psline[linewidth=1.6pt](5.66,0.)(5.74,0.)(5.84,0.)(5.88,0.06)(5.94,0.08)(6.,0.1)(6.04,0.18)(6.08,0.24)(6.14,0.26)(6.2,0.3)(6.28,0.3)(6.38,0.3)(6.46,0.3)(6.52,0.32)(6.58,0.34)(6.64,0.36)(6.7,0.4)(6.76,0.44)(6.82,0.48)(6.84,0.54)(6.86,0.6)(6.88,0.66)(6.88,0.74)(6.9,0.8)(6.92,0.86)(6.98,0.9)(7.04,0.94)(7.1,0.98)(7.12,1.04)(7.12,1.12)(7.12,1.2)(7.1,1.28)(7.04,1.34)(7.02,1.4)(6.96,1.44)(6.94,1.54)(6.9,1.6)(6.86,1.66)(6.84,1.72)(6.82,1.78)(6.8,1.84)(6.76,1.9)(6.72,1.96)(6.7,2.02)(6.66,2.08)(6.6,2.12)(6.5,2.18)(6.44,2.2)(6.38,2.22)(6.32,2.24)(6.26,2.28)(6.16,2.28)(6.08,2.28)(6.02,2.26)(5.96,2.22)(5.92,2.16)(5.86,2.12)(5.8,2.1)(5.72,2.1)(5.64,2.1)(5.54,2.1)(5.46,2.1)(5.38,2.1)(5.3,2.1)(5.22,2.06)(5.12,2.04)(5.04,2.04)(4.96,2.)(4.9,1.9)(4.82,1.8)(4.8,1.74)(4.8,1.66)(4.8,1.58)(4.8,1.5)(4.8,1.42)(4.8,1.34)(4.82,1.28)(4.86,1.22)(4.86,1.14)(4.8,1.1)(4.8,1.)(4.8,0.92)(4.8,0.82)(4.82,0.76)(4.84,0.7)(4.86,0.64)(4.92,0.6)(4.98,0.54)(5.04,0.5)(5.1,0.44)(5.16,0.38)(5.22,0.34)(5.26,0.28)(5.32,0.24)(5.38,0.18)(5.44,0.14)(5.52,0.12)(5.58,0.08)(5.64,0.04)(5.68,-0.02)
\psline[linewidth=1.6pt,linestyle=dashed,dash=1pt 1pt](5.714,0.36)(5.77,0.36)(5.84,0.36)(5.868,0.402)(5.91,0.416)(5.952,0.43)(5.98,0.486)(6.008,0.528)(6.05,0.542)(6.092,0.57)(6.148,0.57)(6.218,0.57)(6.274,0.57)(6.316,0.584)(6.358,0.598)(6.4,0.612)(6.442,0.64)(6.484,0.668)(6.526,0.696)(6.54,0.738)(6.554,0.78)(6.568,0.822)(6.568,0.878)(6.582,0.92)(6.596,0.962)(6.638,0.99)(6.68,1.018)(6.722,1.046)(6.736,1.088)(6.736,1.144)(6.736,1.2)(6.722,1.256)(6.68,1.298)(6.666,1.34)(6.624,1.368)(6.61,1.438)(6.582,1.48)(6.554,1.522)(6.54,1.564)(6.526,1.606)(6.512,1.648)(6.484,1.69)(6.456,1.732)(6.442,1.774)(6.414,1.816)(6.372,1.844)(6.302,1.886)(6.26,1.9)(6.218,1.914)(6.176,1.928)(6.134,1.956)(6.064,1.956)(6.008,1.956)(5.966,1.942)(5.924,1.914)(5.896,1.872)(5.854,1.844)(5.812,1.83)(5.756,1.83)(5.7,1.83)(5.63,1.83)(5.574,1.83)(5.518,1.83)(5.462,1.83)(5.406,1.802)(5.336,1.788)(5.28,1.788)(5.224,1.76)(5.182,1.69)(5.126,1.62)(5.112,1.578)(5.112,1.522)(5.112,1.466)(5.112,1.41)(5.112,1.354)(5.112,1.298)(5.126,1.256)(5.154,1.214)(5.154,1.158)(5.112,1.13)(5.112,1.06)(5.112,1.004)(5.112,0.934)(5.126,0.892)(5.14,0.85)(5.154,0.808)(5.196,0.78)(5.238,0.738)(5.28,0.71)(5.322,0.668)(5.364,0.626)(5.406,0.598)(5.434,0.556)(5.476,0.528)(5.518,0.486)(5.56,0.458)(5.616,0.444)(5.658,0.416)(5.7,0.388)(5.728,0.346)
\psline[linewidth=1.6pt,linestyle=dashed,dash=1pt 1pt](5.624,-0.24)(5.72,-0.24)(5.84,-0.24)(5.888,-0.168)(5.96,-0.144)(6.032,-0.12)(6.08,-0.024)(6.128,0.048)(6.2,0.072)(6.272,0.12)(6.368,0.12)(6.488,0.12)(6.584,0.12)(6.656,0.144)(6.728,0.168)(6.8,0.192)(6.872,0.24)(6.944,0.288)(7.016,0.336)(7.04,0.408)(7.064,0.48)(7.088,0.552)(7.088,0.648)(7.112,0.72)(7.136,0.792)(7.208,0.84)(7.28,0.888)(7.352,0.936)(7.376,1.008)(7.376,1.104)(7.376,1.2)(7.352,1.296)(7.28,1.368)(7.256,1.44)(7.184,1.488)(7.16,1.608)(7.112,1.68)(7.064,1.752)(7.04,1.824)(7.016,1.896)(6.992,1.968)(6.944,2.04)(6.896,2.112)(6.872,2.184)(6.824,2.256)(6.752,2.304)(6.632,2.376)(6.56,2.4)(6.488,2.424)(6.416,2.448)(6.344,2.496)(6.224,2.496)(6.128,2.496)(6.056,2.472)(5.984,2.424)(5.936,2.352)(5.864,2.304)(5.792,2.28)(5.696,2.28)(5.6,2.28)(5.48,2.28)(5.384,2.28)(5.288,2.28)(5.192,2.28)(5.096,2.232)(4.976,2.208)(4.88,2.208)(4.784,2.16)(4.712,2.04)(4.616,1.92)(4.592,1.848)(4.592,1.752)(4.592,1.656)(4.592,1.56)(4.592,1.464)(4.592,1.368)(4.616,1.296)(4.664,1.224)(4.664,1.128)(4.592,1.08)(4.592,0.96)(4.592,0.864)(4.592,0.744)(4.616,0.672)(4.64,0.6)(4.664,0.528)(4.736,0.48)(4.808,0.408)(4.88,0.36)(4.952,0.288)(5.024,0.216)(5.096,0.168)(5.144,0.096)(5.216,0.048)(5.288,-0.024)(5.36,-0.072)(5.456,-0.096)(5.528,-0.144)(5.6,-0.192)(5.648,-0.264)
\psline[linewidth=1.6pt,linecolor=red](4.82,1.98)(4.9,1.98)(4.98,1.98)(5.06,1.98)(5.14,1.96)(5.22,1.96)(5.3,1.96)(5.38,1.96)(5.44,1.98)(5.48,2.04)(5.54,2.08)(5.62,2.1)(5.7,2.1)(5.78,2.1)(5.86,2.1)(5.94,2.1)(6.02,2.1)(6.08,2.14)(6.12,2.2)(6.22,2.22)(6.32,2.22)(6.4,2.22)(6.48,2.22)(6.56,2.22)(6.6,2.16)(6.64,2.1)(6.66,2.04)(6.66,1.96)(6.66,1.88)(6.66,1.8)(6.7,1.74)(6.76,1.7)(6.84,1.66)(6.92,1.66)(6.98,1.62)(7.04,1.56)(7.06,1.5)(7.04,1.42)(7.02,1.34)(6.98,1.28)(6.96,1.22)(6.9,1.18)(6.84,1.14)(6.8,1.08)(6.8,1.)(6.8,0.92)(6.8,0.82)(6.8,0.74)(6.8,0.66)(6.78,0.6)(6.76,0.54)(6.74,0.48)(6.7,0.4)(6.64,0.38)(6.56,0.38)(6.5,0.36)(6.42,0.36)(6.36,0.34)(6.28,0.34)(6.2,0.34)(6.12,0.34)(6.06,0.32)(6.02,0.26)(5.92,0.26)(5.84,0.26)(5.76,0.26)(5.7,0.24)(5.64,0.18)(5.56,0.18)(5.5,0.16)(5.42,0.16)(5.34,0.16)(5.24,0.16)(5.18,0.22)(5.18,0.3)(5.18,0.38)(5.16,0.44)(5.16,0.52)(5.14,0.6)(5.08,0.66)(5.02,0.72)(4.96,0.76)(4.9,0.82)(4.84,0.84)(4.8,0.9)(4.8,0.98)(4.8,1.06)(4.8,1.14)(4.84,1.2)(4.86,1.26)(4.88,1.32)(4.92,1.38)(4.92,1.46)(4.92,1.54)(4.92,1.64)(4.9,1.7)(4.84,1.72)(4.78,1.74)(4.78,1.82)(4.78,1.9)(4.84,1.94)(4.9,1.96)
\pscurve{->}(2.5,0.98)(3.38,1.26)(4.14,0.98)
\uput[u](3.38,1.26){$\phi$}
\pscurve{->}(2.5,0.6)(3.38,0.38)(4.14,0.6)
\uput[d](3.38,0.38){$\phi_k$}
\end{pspicture*}
\caption{The sets $\phi(U)$ (black) and $\phi_k(U)$ (red) for $k\geq k_0$.}
\end{figure}

\vspace{0.25cm}

We next establish that $\mathcal{C}_L$ is a dense subset of $C_+(I^2,\mathbb{R})$ for each $L>0$. 
Given $\varphi\in C_+(I^2,\mathbb{R})$ with image $[a,b]:=\varphi(I^2)\subset (0,\infty)$ and 
given $\varepsilon>0$ with $\varepsilon << b-a$, we want to construct a continuous 
function $\rho\in\mathcal{C}_{L}$ such that $||\rho-\varphi||_0<\varepsilon$. To do this, 
consider the interval $I_{\varepsilon} := \left(a,a+\varepsilon\right) \subset [a,b]$.  
Let $C_{\varepsilon}\subset I^2$ be a connected component of $\varphi^{-1}(I_{\varepsilon})$ and $S_{\varepsilon}$ a sufficiently small 
square contained in $C_{\varepsilon}$ . Let $\rho_{\varepsilon}:S_{\varepsilon}\longrightarrow I_{\varepsilon}$ be a continuous density that is not 
realizable as the Jacobian of an $L$-bi-Lipschitz homeomorphism (see Figure $4$). By the  Tietze extension theorem, there exists a continuous function  
$\hat{\rho}_{\varepsilon}:C_{\varepsilon}\longrightarrow [a,a+\varepsilon]$ such that $\hat{\rho}_{\varepsilon}|_{S_{\varepsilon}}=\rho_{\varepsilon}$ and 
\begin{center}
$\displaystyle\sup_{x\in C_{\varepsilon}}\hat{\rho}_{\varepsilon}(x)=\sup_{x\in S_{\varepsilon}}\rho_{\varepsilon}(x)$.
\end{center}

\psset{unit=1cm,algebraic=true,dimen=middle,dotstyle=o,dotsize=3pt 0,linewidth=0.8pt,arrowsize=3pt 2,arrowinset=0.25}
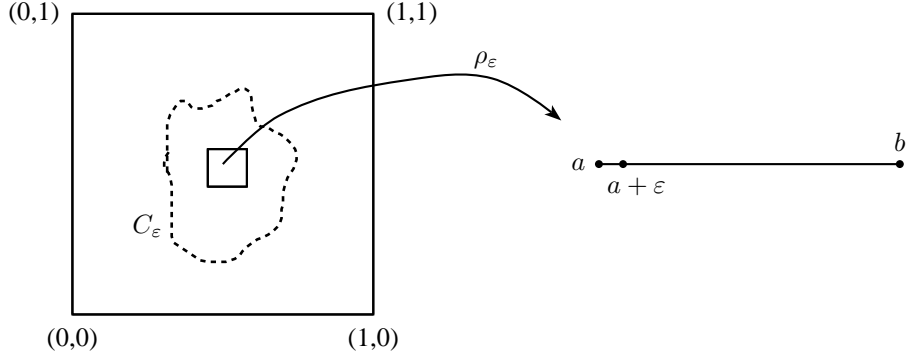
\begin{figure}[h]
\begin{pspicture}(-1.,-1.)(12.,5.)

\pspolygon(0.,0.)(4.,0.)(4.,4.)(0.,4.)
\pspolygon(1.8,2.2)(1.8,1.7)(2.32,1.7)(2.32,2.2)
\psline(0.,0.)(4.,0.)
\psline(4.,0.)(4.,4.)
\psline(4.,4.)(0.,4.)
\psline(0.,4.)(0.,0.)
\psline(7.,2.)(11.,2.)
\psline[linewidth=1pt, linestyle=dashed,dash=2pt 2pt](1.26,1.9)(1.26,2.02)(1.26,2.1)(1.26,2.2)(1.26,2.28)(1.26,2.38)(1.26,2.46)(1.26,2.54)(1.28,2.6)(1.3,2.66)(1.34,2.72)(1.38,2.78)(1.44,2.84)(1.52,2.84)(1.58,2.76)(1.64,2.74)(1.72,2.74)(1.78,2.76)(1.86,2.78)(1.92,2.84)(1.98,2.86)(2.04,2.9)(2.1,2.94)(2.16,2.96)(2.22,3.)(2.3,3.)(2.36,2.96)(2.38,2.88)(2.38,2.8)(2.4,2.7)(2.42,2.64)(2.44,2.56)(2.5,2.5)(2.58,2.5)(2.64,2.46)(2.7,2.42)(2.76,2.4)(2.82,2.36)(2.9,2.28)(2.96,2.2)(2.98,2.14)(2.98,2.06)(2.96,2.)(2.94,1.94)(2.9,1.88)(2.86,1.82)(2.84,1.76)(2.82,1.7)(2.8,1.62)(2.8,1.54)(2.8,1.46)(2.78,1.4)(2.78,1.28)(2.76,1.22)(2.72,1.16)(2.66,1.1)(2.6,1.06)(2.52,1.02)(2.46,1.)(2.4,0.96)(2.32,0.9)(2.26,0.84)(2.22,0.78)(2.16,0.74)(2.1,0.72)(2.04,0.7)(1.96,0.7)(1.88,0.7)(1.8,0.7)(1.7,0.72)(1.64,0.76)(1.58,0.8)(1.52,0.84)(1.46,0.88)(1.4,0.92)(1.34,0.96)(1.32,1.04)(1.32,1.12)(1.32,1.22)(1.32,1.3)(1.32,1.4)(1.32,1.48)(1.32,1.56)(1.32,1.64)(1.32,1.72)(1.3,1.82)(1.28,1.88)(1.22,1.94)(1.22,2.02)(1.24,2.1)(1.28,2.16)
\psline(1.8,2.2)(1.8,1.7)
\psline(1.8,1.7)(2.32,1.7)
\psline(2.32,1.7)(2.32,2.2)
\psline(2.32,2.2)(1.8,2.2)
\pscurve{->}(2,2)(2.74,2.64)(4.46,3.12)(5.58,3.14)(6.5,2.58)
\uput[d](0,0){(0,0)}
\uput[d](4,0){(1,0)}
\uput[l](0,4){(0,1)}
\uput[r](4,4){(1,1)}
\uput[l](7,2){$a$}
\uput[u](11,2){$b$}
\uput[d](7.5,2){$a+\varepsilon$}
\uput[u](5.5,3.1){$\rho_{\varepsilon}$}
\uput[d](1,1.5){$C_{\varepsilon}$}
\begin{scriptsize}
\psdots[dotstyle=*](7.,2.)
\psdots[dotstyle=*](11.,2.)
\psdots[dotstyle=*](7.32,2.)
\end{scriptsize}
\end{pspicture}
\caption{Construction of the function $\rho_{\varepsilon}:S_{\varepsilon}\longrightarrow [a,a+\varepsilon]$.}
\end{figure}


\begin{lemma}
There exists a continuous function $\rho:I^2\longrightarrow\ [a,b]$ such that $\rho|_{I^2\setminus C_{\varepsilon}}=\varphi$, 
$\rho|_{S_{\varepsilon}}=\rho_{\varepsilon}$, and $||\rho-\varphi||_0<\varepsilon.$
\end{lemma}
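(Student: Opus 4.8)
The plan is to glue the bad density $\rho_{\varepsilon}$ into $\varphi$ along the component $C_{\varepsilon}$ by interpolating, so that the modification is supported in $C_{\varepsilon}$, equals $\rho_{\varepsilon}$ on $S_{\varepsilon}$, and stays within $\varepsilon$ of $\varphi$ in the sup norm. First I would record two structural facts. Since $I_{\varepsilon}=(a,a+\varepsilon)$ is open and $\varphi$ is continuous, $\varphi^{-1}(I_{\varepsilon})$ is open, so its component $C_{\varepsilon}$ is open and $I^2\setminus C_{\varepsilon}$ is closed; and $S_{\varepsilon}$, being a closed square contained in the open set $C_{\varepsilon}$, is disjoint from $I^2\setminus C_{\varepsilon}$. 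These two disjoint closed subsets of the normal space $I^2$ let me invoke Urysohn's lemma to obtain a continuous $\eta:I^2\to[0,1]$ with $\eta\equiv 1$ on $S_{\varepsilon}$ and $\eta\equiv 0$ on $I^2\setminus C_{\varepsilon}$.

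Next I would define
\[
\rho(x):=\begin{cases}\eta(x)\,\hat{\rho}_{\varepsilon}(x)+\bigl(1-\eta(x)\bigr)\varphi(x), & x\in C_{\varepsilon},\\[2pt] \varphi(x), & x\in I^2\setminus C_{\varepsilon},\end{cases}
\]
and verify the three required identities. On $S_{\varepsilon}$ one has $\eta\equiv 1$, so $\rho=\hat{\rho}_{\varepsilon}=\rho_{\varepsilon}$; on $I^2\setminus C_{\varepsilon}$ the definition directly gives $\rho=\varphi$; and since both $\hat{\rho}_{\varepsilon}$ and $\varphi|_{C_{\varepsilon}}$ take values in $[a,a+\varepsilon]$, the convex combination does too, so $\rho$ maps into $[a,b]$. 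Continuity is the first delicate point: on the open set $C_{\varepsilon}$ and on the closed set $I^2\setminus C_{\varepsilon}$ the formula is manifestly continuous, and the only issue is matching across $\partial C_{\varepsilon}$. Approaching a boundary point from inside $C_{\varepsilon}$, $\eta\to 0$ while $\hat{\rho}_{\varepsilon}$ stays bounded, so $\eta\,\hat{\rho}_{\varepsilon}\to 0$ and hence $\rho\to\varphi$, which is exactly the value prescribed on the other side. The gluing lemma applied to the closed cover $\overline{C_{\varepsilon}}$, $\;I^2\setminus C_{\varepsilon}$, whose overlap is $\partial C_{\varepsilon}$ and on which the two pieces agree, then yields continuity of $\rho$ on all of $I^2$.

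The real content is the estimate $||\rho-\varphi||_0<\varepsilon$, and this is where the $\sup$-condition built into $\hat{\rho}_{\varepsilon}$ is essential. Writing $M:=\sup_{S_{\varepsilon}}\rho_{\varepsilon}$, continuity of $\rho_{\varepsilon}$ on the compact $S_{\varepsilon}$ together with $\rho_{\varepsilon}(S_{\varepsilon})\subset(a,a+\varepsilon)$ forces $M<a+\varepsilon$, and $\hat{\rho}_{\varepsilon}$ was chosen so that $\hat{\rho}_{\varepsilon}\le M$ on all of $C_{\varepsilon}$. Thus for $x\in C_{\varepsilon}$ I have $\hat{\rho}_{\varepsilon}(x)\in[a,M]$ and $\varphi(x)\in(a,a+\varepsilon)$, giving the two strict bounds $\hat{\rho}_{\varepsilon}(x)-\varphi(x)<M-a<\varepsilon$ and $\varphi(x)-\hat{\rho}_{\varepsilon}(x)<(a+\varepsilon)-a=\varepsilon$, that is $|\hat{\rho}_{\varepsilon}(x)-\varphi(x)|<\varepsilon$ pointwise. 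Since $(\rho-\varphi)(x)=\eta(x)\bigl(\hat{\rho}_{\varepsilon}(x)-\varphi(x)\bigr)$ on $C_{\varepsilon}$ and $\rho-\varphi\equiv 0$ elsewhere, the continuous function $|\rho-\varphi|$ attains its maximum on the compact set $I^2$: at a point of $I^2\setminus C_{\varepsilon}$ the value is $0$, and at a point of $C_{\varepsilon}$ it is at most $|\hat{\rho}_{\varepsilon}-\varphi|<\varepsilon$. Hence the attained maximum is strictly below $\varepsilon$, which is precisely $||\rho-\varphi||_0<\varepsilon$. I expect the one genuine obstacle to be exactly this strict inequality, which would fail for a naive extension and is salvaged by combining the $\sup$-normalization of $\hat{\rho}_{\varepsilon}$ with the compactness of $I^2$.
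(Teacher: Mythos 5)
Your proof is correct and is essentially the paper's own argument: the paper takes a partition of unity $\{\varrho_1,\varrho_2\}$ subordinate to the open cover $\{I^2\setminus S_{\varepsilon},\, C_{\varepsilon}\}$ and sets $\rho=\varphi\varrho_1+\hat{\rho}_{\varepsilon}\varrho_2$, which is exactly your convex combination with $\varrho_2=\eta$ and $\varrho_1=1-\eta$. The only difference is one of care, in your favor: the paper asserts the continuity of the glued function and the strict bound $\|\rho-\varphi\|_0<\varepsilon$ essentially without comment, whereas you justify both (continuity across $\partial C_{\varepsilon}$ via $\eta\to 0$ and boundedness of $\hat{\rho}_{\varepsilon}$, and strictness of the sup bound via the maximum being attained on the compact set $I^2$).
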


\begin{proof}
Let $\{\varrho_1,\varrho_2\}$ be a partition of unity subordinate to the open cover $\{I^2\setminus S_{\varepsilon}, C_{\varepsilon}\}$ of $I^2$. 
Define $\rho:I^2\longrightarrow [a,b]$ by letting
\begin{center}
$\rho(x):=\varphi\varrho_1+\hat{\rho}_{\varepsilon} \varrho_2$.
\end{center}
By definition, it readily follows that $\rho|_{I^2\setminus C_{\varepsilon}} = \varphi$ and $\rho|_{S_{\varepsilon}}=\rho_{\varepsilon}$. 
In addition, since $\{\varrho_1,\varrho_2\}$ is a partition of unity, we have 
\begin{equation*}
\begin{split}
||\rho-\varphi||_0 &=||\varphi\varrho_1+\hat{\rho_{\varepsilon}}\varrho_2-\varphi||_0\\
                   &=||\varphi\varrho_1+\hat{\rho}_{\varepsilon}\varrho_2-\varphi(\varrho_1+\varrho_2)||_0\\
                   &=||(\hat{\rho_{\varepsilon}}-\varphi)\varrho_2||_0.
\end{split}
\end{equation*}
Now, since $\hat{\rho}_{\varepsilon} (U_{\varepsilon})\subset [a,a+\varepsilon]$, we have 
$\displaystyle\sup_{x\in C_{\varepsilon}\setminus S_{\varepsilon}}|(\hat{\rho}_{\varepsilon}(x)-\varphi(x))\varphi_2(x)|<\varepsilon$. 
Therefore, $||\rho-\varphi||_0<\varepsilon$, as desired.
\end{proof}


\section{The $L^{\infty}$ case}

We consider the set $L_+^{\infty}(I^2,\mathbb{R}):=\{\rho\in L^{\infty}(I^2,\mathbb{R})\mbox{ : } \rho(x) > 0 \mbox{ a.e.}\}$. 
Our goal is to prove the next

\begin{theorem}
Let $\mathcal{C}_{\infty}$ be the set of all positive measurable functions $\rho\in L^{\infty}_+(I^2,\mathbb{R})$ such that there is no 
bi-Lipschitz map $f \!: I^2\longrightarrow\mathbb{R}^2$ satisfying $Jac(f) = \rho$ a.e. Then $\mathcal{C}_{\infty}$ is a thick subset 
of $L^{\infty}_+(I^2,\mathbb{R})$.
\end{theorem}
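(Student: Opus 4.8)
The plan is to follow the architecture of the continuous case. For each integer $L>1$ let $\mathcal{C}_{L,\infty}$ be the set from Remark \ref{rem-infty}. Its openness in $L^{\infty}_+(I^2,\mathbb{R})$ is already granted there, so the whole theorem reduces to proving that each $\mathcal{C}_{L,\infty}$ is dense. Indeed, a bi-Lipschitz map is $L$-bi-Lipschitz for some real $L\ge 1$, and an $L$-bi-Lipschitz map is a fortiori $L'$-bi-Lipschitz for every $L'\ge L$; hence $\mathcal{C}_{L',\infty}\subseteq\mathcal{C}_{L,\infty}$ when $L'\ge L$, and one checks directly that $\mathcal{C}_{\infty}=\bigcap_{n\ge 2}\mathcal{C}_{n,\infty}$. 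Once each factor is shown to be open and dense, this exhibits $\mathcal{C}_{\infty}$ as a countable intersection of open dense sets, i.e.\ as a thick set, which is exactly the assertion.

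Thus the content is the density of $\mathcal{C}_{L,\infty}$: given $\varphi\in L^{\infty}_+(I^2,\mathbb{R})$ and $\varepsilon>0$, I must produce $\rho\in\mathcal{C}_{L,\infty}$ with $\|\rho-\varphi\|_{\infty}<\varepsilon$. The naive continuous-case strategy of planting a rescaled bad density on a subsquare where $\varphi$ is nearly constant fails here, since an $L^{\infty}$ function need not be $L^{\infty}$-close to any constant on any square (for instance a function jumping between two values across a dense open set of small measure oscillates by a fixed amount on every square). The key observation that repairs this is that Lemma \ref{lema-epsilon} only asks the Jacobian to coincide with the bad density $\rho_i$ \emph{up to a set of area $\delta_i$}; this tolerance is precisely what lets me discard the small-measure region on which $\varphi$ refuses to be close to my target value.

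Concretely, I would fix a Lebesgue point $x_0$ of $\varphi$ with $v:=\varphi(x_0)>0$, set $L_*:=L\max(\sqrt v,1/\sqrt v)\ge L$, and apply Lemma \ref{lema-epsilon} with parameter $L_*$ and contrast $c<\varepsilon/(2v)$ to obtain a bad density $\rho_i:I^2\to[1,1+c]$ together with $\mathcal{S}_i,\delta_i,\kappa$; I then fix $i$ so large that $(1+\kappa)^i>L_*^2$. By the Lebesgue differentiation theorem and Chebyshev's inequality, I can choose a small square $S\ni x_0$ of side $s$, with $S\subset I^2$, such that
$$\lambda\bigl(\{x\in S:\ |\varphi(x)-v|>\varepsilon/2\}\bigr)<\delta_i\,\lambda(S).$$
Taking the affine identification $T:I^2\to S$, $T(u)=x_0+s\,u$, I set $\beta:=v\,(\rho_i\circ T^{-1}):S\to[v,v+\varepsilon/2)$ and finally define $\rho:=\beta$ on the ``good set'' $\{x\in S:\ |\varphi(x)-v|\le\varepsilon/2\}$ and $\rho:=\varphi$ everywhere else. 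Then $|\rho-\varphi|\le|\beta-v|+|v-\varphi|<\varepsilon$ on the good set and $\rho=\varphi$ off it, so $\|\rho-\varphi\|_{\infty}<\varepsilon$.

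It remains to verify $\rho\in\mathcal{C}_{L,\infty}$. If some $L$-bi-Lipschitz $f:I^2\to\mathbb{R}^2$ satisfied $Jac(f)=\rho$ a.e., then $f|_S$ would be $L$-bi-Lipschitz with $Jac(f|_S)=\rho|_S$, and I would pass to $g:=\tfrac{1}{s\sqrt v}\,f\circ T:I^2\to\mathbb{R}^2$. A direct computation gives $Jac(g)=\tfrac1v(\rho|_S\circ T)$, which equals $\rho_i$ on the preimage of the good set and therefore differs from $\rho_i$ only on a set of area $<\delta_i\lambda(S)/s^2=\delta_i$; moreover the domain factor $1/s$ keeps the bi-Lipschitz constant at $L$ while the value factor $1/\sqrt v$ inflates it to at most $L_*$, so $g$ is $L_*$-bi-Lipschitz and in particular stretches distances by at most $L_*$. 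Lemma \ref{lema-epsilon} would then force some segment of $\mathcal{S}_i$ to be $\tfrac{(1+\kappa)^i}{L_*}$-stretched by $g$, contradicting $(1+\kappa)^i>L_*^2$. Hence no such $f$ exists and $\rho\in\mathcal{C}_{L,\infty}$, completing the density step and the theorem. The only genuinely delicate point, and the heart of the argument, is this density step: matching an $L^{\infty}$ target that may oscillate at every scale, which is exactly what the area-tolerance $\delta_i$ of Lemma \ref{lema-epsilon} combined with Lebesgue density is tailored to handle.
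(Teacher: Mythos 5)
Your proposal is correct and follows essentially the same route as the paper: openness comes from Remark \ref{rem-infty}, the identity $\mathcal{C}_{\infty}=\bigcap_{n}\mathcal{C}_{n,\infty}$ reduces everything to density of each $\mathcal{C}_{L,\infty}$, and density is obtained by planting a rescaled Burago--Kleiner density on a small square where $\varphi$ is approximately constant off a subset of small relative measure, which is exactly how the paper exploits the area tolerance $\delta_i$ of Lemma \ref{lema-epsilon}. The only differences are cosmetic: the paper truncates $\varphi$ below at $\varepsilon$ and uses a Lebesgue density point of a level band $\varphi_{\varepsilon}^{-1}([a,a+\varepsilon])$ of positive measure, whereas you work at a Lebesgue point of $\varphi$ itself via Chebyshev (thereby getting $\varepsilon$ instead of $2\varepsilon$ and avoiding the truncation), and you make explicit the rescaling bookkeeping (the constant $L_*$, the choices $c<\varepsilon/(2v)$ and $(1+\kappa)^i>L_*^2$) that the paper compresses into the phrase ``an easy application of Lemma \ref{lema-epsilon}.''
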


As in the continuous case, this theorem is a consequence of the next

\begin{proposition}Given $L>1$, let $\mathcal{C}_{L,{\infty}}$ be the set of all positive functions $\rho\in L_+^{\infty}(I^2,\mathbb{R})$ such that there is no 
$L$-bi-Lipschitz map $f:I^2\longrightarrow\mathbb{R}^2$ satisfying $Jac(f)=\rho$ a.e. Then $\mathcal{C}_{L,{\infty}}$ is an open dense subset of 
$L^{\infty}_+(I^2,\mathbb{R})$.
\end{proposition}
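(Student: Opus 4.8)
The plan is to treat openness and density separately. Openness of $\mathcal{C}_{L,\infty}$ in $L^{\infty}_+(I^2,\mathbb{R})$ has already been recorded in Remark \ref{rem-infty}, so the only thing left is to prove that $\mathcal{C}_{L,\infty}$ is dense. The strategy mirrors the density argument of Section \ref{first section}: given $\varphi\in L^{\infty}_+(I^2,\mathbb{R})$ and $\varepsilon>0$, I would produce $\rho\in\mathcal{C}_{L,\infty}$ with $\|\rho-\varphi\|_{\infty}<\varepsilon$ by inserting a rescaled Burago--Kleiner bad density on a small square. The essential new difficulty compared with the continuous case is that $\varphi$ is merely measurable, so there need not be any square on which $\varphi$ is uniformly within $\varepsilon$ of a constant. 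This is precisely where I would exploit the robustness built into Lemma \ref{lema-epsilon}: the obstruction survives even if the Jacobian is allowed to differ from the model density on a set of area up to $\delta_i$.

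Concretely, I would first fix a value $t>0$ in the essential range of $\varphi$ (such a value exists since $\varphi>0$ a.e., so some superlevel set $\{\varphi>1/n\}$ has positive measure), so that the good set $G:=\{x\in I^2 : |\varphi(x)-t|<\varepsilon/2\}$ has positive Lebesgue measure. Next, I would invoke Lemma \ref{lema-epsilon} for the bi-Lipschitz constant $\tilde L:=L\max(\sqrt t,1/\sqrt t)$ and a small oscillation $c$ with $tc<\varepsilon/2$, choosing the index $i$ large enough that $(1+\kappa)^{i}>\tilde L^{2}$; this yields a continuous density $\rho_i:I^2\to[1,1+c]$, a finite family of segments $\mathcal{S}_i$, and a tolerance $\delta_i>0$ such that no $\tilde L$-bi-Lipschitz map can realize $\rho_i$ up to area $\delta_i$. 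Then I would take a Lebesgue density point $x_0$ of $G$ and a square $S\ni x_0$ of side $s$ small enough that $\lambda(S\setminus G)<\delta_i\,\lambda(S)$, let $\Phi:I^2\to S$ be the affine rescaling, set the model density $\rho_{\varepsilon}:=t\,(\rho_i\circ\Phi^{-1})$ on $S$, and finally define $\rho:=\rho_{\varepsilon}$ on $S\cap G$ and $\rho:=\varphi$ on the rest of $I^2$.

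The verification then has two parts. For the norm estimate, on $S\cap G$ one has $\rho_{\varepsilon}\in[t,t(1+c)]$ and $|\varphi-t|<\varepsilon/2$, so $|\rho-\varphi|\le tc+\varepsilon/2<\varepsilon$, while $\rho=\varphi$ elsewhere; hence $\|\rho-\varphi\|_{\infty}<\varepsilon$. For unrealizability, suppose some $L$-bi-Lipschitz $f:I^2\to\mathbb{R}^2$ satisfied $Jac(f)=\rho$ a.e. Rescaling, I would set $h:=\frac{1}{s\sqrt t}\,f\circ\Phi:I^2\to\mathbb{R}^2$; a direct computation shows that $h$ is $\tilde L$-bi-Lipschitz and that $Jac(h)=\rho_i$ wherever $\rho=\rho_{\varepsilon}$ on $S$, that is, off the set $\Phi^{-1}(S\setminus G)$ whose area equals $\lambda(S\setminus G)/\lambda(S)<\delta_i$. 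Lemma \ref{lema-epsilon} then forces the endpoints of one of the segments in $\mathcal{S}_i$ to be $(1+\kappa)^{i}/\tilde L$-stretched by $h$, and since $(1+\kappa)^{i}>\tilde L^{2}$ this stretching exceeds $\tilde L$, contradicting that $h$ is $\tilde L$-bi-Lipschitz. Hence $\rho\in\mathcal{C}_{L,\infty}$, which gives density. The main obstacle I anticipate is the bookkeeping that makes this reduction clean: I must scale the target by $t$ to match the value range dictated by $\varphi$ (which turns $L$ into $\tilde L$) and by $1/s$ to cancel the Jacobian factor $s^{2}$ coming from the domain rescaling, and I must confirm that the exceptional set $S\setminus G$, on which $\rho$ is left equal to $\varphi$, stays below the tolerance $\delta_i$ as a fraction of $S$ — a fraction that is scale-invariant, so that shrinking $S$ around the density point $x_0$ suffices.
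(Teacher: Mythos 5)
Correct, and essentially the paper's own approach: openness is quoted from Remark \ref{rem-infty}, and density is proved by inserting a Burago--Kleiner density from Lemma \ref{lema-epsilon} on a small square centered at a Lebesgue density point of a set where $\varphi$ is nearly constant, relying on the measure tolerance $\delta_i$ in Lemma \ref{lema-epsilon} to absorb the exceptional set where $\rho$ is left equal to $\varphi$. Your only deviations are cosmetic and, if anything, tighten the argument: you pick a value $t$ in the essential range instead of the paper's truncation $\varphi_{\varepsilon}$ followed by a pigeonhole choice of an interval $[a,a+\varepsilon]$ (so you obtain error $<\varepsilon$ rather than $2\varepsilon$), and you write out explicitly the rescaling and bi-Lipschitz-constant bookkeeping --- the passage from $L$ to $\tilde L = L\max(\sqrt{t},1/\sqrt{t})$ and the choice of $i$ with $(1+\kappa)^i > \tilde L^2$ --- that the paper compresses into ``an easy application of Lemma \ref{lema-epsilon}.''
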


\begin{proof} By Remark 1, we know that $\mathcal{C}_{L,\infty}$ is open. Let us show that it is a dense subset of $L_+^{\infty}(I^2,\mathbb{R})$. 
Given $\varphi \in L_+^{\infty}(I^2,\mathbb{R})$, denote $b:= \sup \varphi (x)$. Fix $\varepsilon > 0$, and define $\varphi_{\varepsilon}$ as 
\begin{center}
$\varphi_{\varepsilon} (x) = \left\{
\begin{array}{c l}
\varphi (x) \quad \mbox{   if }  \varphi (x) \geq \varepsilon,\\
\\
\varepsilon \qquad \mbox{   if } \varphi (x) < \varepsilon.\\
\end{array}
\right.$
\end{center}
Let $a \in [\varepsilon,b]$ be such that $\varphi_{\varepsilon}^{-1} ([a,a+\varepsilon])$ has positive measure. Let $y$ be a Lebesgue density 
point of $\varphi_{\varepsilon}^{-1} ([a,a+\varepsilon])$, which means that 
\begin{equation}
\displaystyle\lim_{\delta\to 0^+}\dfrac{\lambda(\varphi_{\varepsilon}^{-1} ([a,a+\varepsilon]) \cap S_{\delta}(y))}{\lambda(S_{\delta}(y))} = 1,
\end{equation}
where $S_{\delta}(y)$ denotes the square centered at $y$ and having sidelength $\delta$. An easy application of Lemma \ref{lema-epsilon} then 
shows the following: For a small-enough $\delta > 0$, there is a function $\rho_{\varepsilon}: S_{\delta} (y) \to [\varepsilon,b]$ that takes values in 
$[a,a+\varepsilon]$ for points in $\varphi_{\varepsilon}^{-1} ([a,a+\varepsilon]) \cap S_{\delta}(y)$ and coincides with $\varphi_{\varepsilon}$ on 
the complement of $\varphi_{\varepsilon}^{-1} ([a,a+\varepsilon]) \cap S_{\delta}(y)$ such that no $L$-bi-Lipchitz map 
$f: S_{\delta} (y) \to \mathbb{R}^2$ can have a Jacobian equal to $\rho_{\varepsilon}$. Let $\rho$ be defined by letting 
\begin{center}
$\rho (x) = \left\{
\begin{array}{c l}
\rho_{\varepsilon} (x) \quad \mbox{   if }  x \in S_{\delta}(y),\\
\\
\varphi_{\varepsilon} \qquad \mbox{   if } x \notin S_{\delta}(y).\\
\end{array}
\right.$
\end{center}
By construction, $\rho$ belongs to $\mathcal{C}_{L,\infty}$, and $\| \rho - \varphi \|_{\infty} \leq 2 \varepsilon$. As this construction can 
be performed for any $\varepsilon > 0$, this shows that $\mathcal{C}_{L,\infty}$ is a dense subset of $L_+^{\infty}(I^2,\mathbb{R})$. 
\end{proof}
 
\vspace{0.3cm}

\noindent{\bf Acknowledgments.} 
This work was funded by the Fondecyt Project 1160541 and Anillo 1415 (PIA CONICYT) {\em Geometry at the Frontier}.


\vspace{0.4cm}

\noindent Rodolfo Viera (rodolfo.viera@usach.cl)\\
Dpto de Matem\'atica y Ciencia de la Computaci\'on\\
Universidad de Santiago de Chile\\
Alameda 3363, Estaci\'on Central, Santiago, Chile

\end{document}